\documentclass[10pt,reqno,oneside, a4paper]{amsart}
\DeclareMathAlphabet{\mathcal}{OMS}{cmsy}{m}{n}
\usepackage[left=2.5cm,right=2.5cm, top=3cm,bottom=3.5cm]{geometry}
\usepackage{parts/macros}
\numberwithin{equation}{section}
\newtheorem{theorem}{Theorem}[section]
\newtheorem{lemma}[theorem]{Lemma}

\newtheorem{definition}[theorem]{Definition}
\let \citep \cite
\let \citet \cite
\title[Long time $L^{\infty}(L^2)$ a posteriori error estimates for parabolic problems]{Long time $L^{\infty}(L^2)$ a posteriori error estimates \\ for fully discrete parabolic problems}
\author{Oliver J. Sutton}
\address[O. J. Sutton]{Department of Mathematics and Statistics, University of Reading, Whiteknights, PO Box 220, Reading, RG6 6AX, UK}
\email{O.Sutton@reading.ac.uk}
\date{}
\begin{document}
\maketitle
\begin{abstract}
Computable estimates for the error of finite element discretisations of parabolic problems in the $L^{\infty}(0,T; L^2(\domain))$ norm are developed, which exhibit constant effectivities (the ratio of the estimated error to the true error) with respect to the simulation time.
These estimates, which are of optimal order, represent a significant advantage for long-time simulations, and are derived using energy techniques based on elliptic reconstructions.
The effectivities of previous optimal order error estimates in this norm derived using energy techniques are shown numerically to grow either in proportion to the simulation duration or its square root, a key disadvantage compared with earlier estimators derived using parabolic duality arguments.
The new estimates form a continuous family, almost all of which are new, reproducing certain familiar energy-based estimates well suited for short-time simulations and not available through the parabolic duality framework.
For clarity, we demonstrate the technique applied to a linear parabolic problem discretised using standard conforming finite element methods in space coupled with backward Euler and Crank-Nicolson time discretisations, although it can be applied much more widely.
\end{abstract}

\section{Introduction}
An unavoidable fact of life when simulating physical phenomena is that the approximate solutions produced by discrete schemes, such as finite element methods and their relations, will not perfectly match the true solutions of the model, resulting in some discretisation error.
A natural question this raises, and one into which a breathtaking quantity of work has been invested, is whether this discretisation error can be quantified for a given simulation, knowing only the problem data and computed approximate solution.
Such \emph{computable a posteriori error estimates} can then be used to determine how faithful the discrete solutions are to the model, thus providing an indication of the reliability of the simulation and opening the door to the development of rigorous adaptive algorithms.
Although the derivation of such error estimates for models based on systems of elliptic partial differential equations is by now rather mature (we refer to~\cite{Ainsworth:2000cv,Brenner:2008tq,Verfurth:uz}, for instance), interesting open questions still remain for models based on parabolic and hyperbolic partial differential equations.

For linear second order parabolic problems there are two particularly natural norms in which to measure the error, the $L^2(0,t; H^1(\domain))$ norm and the $L^{\infty}(0,t; L^2(\domain))$ norm (a description of the notation we use here for Sobolev spaces and their associated norms is given in Section~\ref{sec:modelProblem}; for further details on these and Bochner spaces see~\cite{Adams:2003wi}), both of which arise through conventional energy arguments.
Optimal order a posteriori estimates for the error in the $L^2(0,t; H^1(\domain))$ norm may be proven using direct energy arguments, as shown by~\cite{Picasso:1998iw} and~\cite{Chen:2004ck}.
Although the same arguments provide a bound for the (higher order) error in the $L^{\infty}(0,t; L^2(\domain))$ norm, the resulting estimators are in fact of suboptimal order.

Optimal order estimates for the $L^{\infty}(0,t; L^2(\domain))$ norm error were first proven by 
Eriksson and Johnson~
\cite{ERIKSSON:1991be,ERIKSSON:1995in} using duality techniques.
A significant recent breakthrough, however, was the introduction of the \emph{elliptic reconstruction} technique by 
Makridakis and Nochetto in~
\cite{Makridakis:2003ws}, allowing optimal order error estimates to be derived in the $L^{\infty}(0,t; L^2(\domain))$ norm via energy arguments by introducing `elliptic reconstructions' of the discrete solution.
The role these reconstructions play in the derivation of the error estimates may be seen as an a posteriori counterpart to the role of the Ritz projection in the derivation of optimal order a priori error estimates in the $L^{\infty}(0,t; L^2(\domain))$ norm, first deployed by 
Wheeler~
\cite{Wheeler:1973hd}.
This splits the error into an elliptic component, which is treated using a posteriori error estimates derived for the associated elliptic problem, and a parabolic component which satisfies a differential equation with data which may be controlled at optimal order.
In the fully (space and time) discrete setting, these a posteriori error bounds are typically proven by combining the elliptic reconstruction in space with a suitable piecewise polynomial time reconstruction.
An overview of this methodology is given by~\cite{Makridakis:2007ef}, and the combination of space and time reconstructions has shown itself to be very flexible in many settings.
For instance, optimal order $L^{\infty}(0,t; L^2(\domain))$ error estimates have been proven using this technique for standard conforming finite element methods, coupled with backward Euler~\citep{Lakkis:2006jk}, Crank-Nicolson~\citep{Lozinski:2009bt,Bansch:2012gu,Bansch:2013cu}, fractional $\theta$-step~\citep{Karakatsani:2012ci}, and general $hp$ discontinuous Galerkin~\citep{Georgoulis:2017wq} time-stepping schemes, to name but a few.

A problem afflicting the above $L^{\infty}(0,t; L^2(\domain))$ norm error estimates derived using energy techniques with elliptic reconstructions, however, is that their \emph{effectivity}, the ratio of the estimated error to the true error (a measure of the quality of the estimate), grows with the simulation time.
In Section~\ref{sec:numerics}, we demonstrate through numerical examples that this growth typically  occurs at rate $t$ or $\sqrt{t}$ for a simulation of length $t$.
Since this growth is independent of the discretisation parameters it does not prevent the estimates being of asymptotically optimal order, but it is less than desirable from the perspective of obtaining good estimates of the discretisation error, particularly for long time simulations.
It is interesting to note that this growth does not occur for the duality-based $L^{\infty}(0,t; L^2(\domain))$ error estimates of~\cite{ERIKSSON:1991be,ERIKSSON:1995in}, and the cause of this problem can be directly attributed to the way in which the terms of the error estimate accumulate with time.
The true error accumulates through time in an $L^{\infty}(0,t)$ fashion, which is mimicked by the duality-based estimates where the various estimator components also accumulate through time in an $L^{\infty}(0,t)$ manner.
Estimators derived through energy arguments, however, typically involve terms measuring the various sources of discretisation error accumulating through time in $L^1(0,t)$ or $L^2(0,t)$ norms, leading to error estimators which grow rapidly even when the error itself may not.
The inherent problem with such an estimate can be seen by considering the situation where equally large errors are committed on each time step of the simulation: in this case, an $L^1(0,t)$ or $L^2(0,t)$-type accumulation of the error will grow with $t$, while an $L^{\infty}(0,t)$-type accumulation will remain constant.
It is this gap between rates of accumulation which manifests itself as ever-growing effectivities.

The main result of this paper, Theorem~\ref{thm:finalestimate}, addresses this shortcoming. 
New estimates are introduced for the error in the $L^{\infty}(0,t; L^2(\domain))$ norm, in which the individual estimator terms are permitted to accumulate through time in $L^p(0,t)$ norms for any $p \in [1,\infty]$.
The derivation of these estimates is based on energy techniques through elliptic reconstructions and, to the best of our knowledge, their structure is completely novel, providing a wide variety of different bounds simultaneously, almost all of which are new.
In particular, this family includes the familiar elliptic reconstruction-based estimates built on $L^1(0,t)$ and $L^2(0,t)$-type accumulations, whilst also allowing counterparts to the duality-based estimators incorporating $L^\infty(0,t)$-type accumulations to be derived using energy techniques.
The key advantage of this is that, since the estimator terms can therefore accumulate through time in the same norm as the error, they exhibit effectivities which in practice appear to remain bounded with respect to the simulation length, typically tending to some constant value.
As such, these estimates are very well suited for long time simulations.

The technique for deriving these estimates is fairly simple and essentially due to the structure of the partial differential equation.
Ultimately, it mirrors a classical technique, well known from the a priori error analysis of discretisations of parabolic problems, of using a Poincar\'e-Friedrichs inequality to remove the $H^1$ norm of the error from the left hand side of the error equation derived by energy techniques, providing instead a bounded exponential factor on each term of the estimate, cf.~\citet[Chapter 1]{Thomee:2006th} for the a priori viewpoint, or Lemma~\ref{lem:reconstructionError} for its manifestation here.
We note that an outwardly similar exponential factor appears in the final time error bounds in the $L^2(\domain)$ norm of~\citet[\S6]{Lakkis:2007du} and the final time $L^{\infty}(\domain)$ norm bounds of~\cite{Kopteva:2013hy,Kopteva:2017ib}.
The difference is in how this exponential factor is used; here, it allows us to apply a H\"older inequality over the time domain (cf. Lemma~\ref{lem:timeAccumulations}), producing an estimate with general $L^p(0,t)$-type accumulations, while in the aforementioned final time bounds it provides a `forgetfulness' property to the estimator which is broadly associated with the smoothing properties of the differential equation.

The technique is demonstrated here applied to numerical schemes comprised of backward Euler and Crank-Nicolson time-stepping schemes coupled with conforming finite elements in space, using fixed spatial and temporal discretisations.
We do not, however, foresee any extra significant difficulties in applying the technique to other classes of discretisations or associated adaptive schemes.
As an example, we refer the reader to~\cite{Sutton:2017vc} where the technique is applied in the context of an adaptive algorithm for very general polygonal meshes built on a spatial discretisation using a virtual element method~\citep{VEIGA:2013wi,Cangiani:2017fu,Sutton:2017dp} with backward Euler time-stepping.
The assumption here of a fixed spatial mesh, however, allows us to remove a layer of technicalities, and thus present a slightly simplified version of the quadratic time discretisation used to derive optimal order $L^{\infty}(0,t; L^2(\domain))$ estimates for Crank-Nicolson schemes by~\cite{Akrivis:2006hw,Lozinski:2009bt,Bansch:2012gu,Bansch:2013cu}.
We further refine the results of~\cite{Bansch:2012gu,Bansch:2013cu} by deriving estimates in which the data approximation term is of optimal order even when the forcing term is non-zero on the boundary of the domain.
This loss of optimality was due to the fact that the data approximation terms of the previous estimates involved the projection of the forcing data onto discrete functions which satisfy zero boundary values, which is suboptimal when the forcing function is non-zero on the boundary.

We begin by introducing our model parabolic problem in Section~\ref{sec:modelProblem}, and its discretisation in Section~\ref{sec:discretisation}.
The new class of a posteriori error estimates is then derived in Section~\ref{sec:aposteriori} and their practical performance is assessed through a set of numerical examples in Section~\ref{sec:numerics}.
Finally, we present some conclusions in Section~\ref{sec:conclusion}.

\section{Model problem}\label{sec:modelProblem}
Let $\finaltime > 0$, and let $\domain \subset \Re^{\spacedim}$ be a bounded open domain with $\spacedim = 2,3$.
For simplicity, we assume that the domain $\domain$ is a convex polygon, although the results we present could be extended to non-convex domains with reentrant corners through careful application of weighted estimates; see, for example,~\cite{Wihler:2007tv}.

The model problem we consider is to find $u : \domain \times [0,\finaltime] \to \Re$ satisfying
\begin{align*}
	\begin{split}
	u_t(x,t) - \diffop u(x,t) &= \force(x,t)\phantom{u_0(x)0} \text{ for } (x,t) \in \domain \times (0,\finaltime],
	\\
	u(x,0) &= u_0(x)\phantom{\force(x,t)0} \text{ for } x \in \domain,
	\\
	u(x,t) &= 0\phantom{u_0(x)\force(x,t)} \text{ for } (x,t) \in \boundary \times (0,\finaltime].
	\end{split}
\end{align*}
Here $\diffop$ is a symmetric positive definite linear second order elliptic operator of the form 
\begin{align*}
	\diffop v = \nabla \cdot (A \nabla v) - \mu v,
\end{align*}
where $\mu \in L^2(\domain \times [0,\finaltime])$ with $\mu(x,t) \geq 0$ for almost every $(x,t) \in \domain \times [0,\finaltime]$, and $A : \domain \times [0,\finaltime] \to \Re^{\spacedim \times \spacedim}$ is symmetric and positive definite for almost every $(x,t) \in \domain \times [0,\finaltime]$.
Let $\weakform : H^1_0(\domain) \times H^1_0(\domain) \to \Re$ denote the bilinear form
\begin{align*}
	\weakform(v,w) = (A \nabla v, \nabla w) + (\mu v,w),
\end{align*}
where $(v,w) \equiv \int_{\domain} vw \differential x$ denotes the $L^2(\domain)$ inner product.
Further, for $\omega \subset \Re^{m}$ and an integer $m > 0$, we use $\norm{\cdot}_{W^{k,p}(\omega)}$ and $\abs{\cdot}_{W^{k,p}(\omega)}$ to denote the standard norm and seminorm on the Sobolev space $W^{k,p}(\omega)$ for $k > 0$ and $p \in [1,\infty]$ (for further details see~\cite{Adams:2003wi}, for example).
For the special case of $\omega = \domain$, we shall denote the $L^2(\domain)$ norm by $\norm{\cdot}$ and the $H^k(\domain)$ norm by $\norm{\cdot}_{k}$.

We note that $\weakform$ induces a norm on $H^1_0(\domain)$, which we denote by $\weakform(v, v) = \triplenorm{v}^2$. 
We further observe that $\weakform$ is continuous in this norm, and under the assumptions above $\triplenorm{\cdot}$ is equivalent to the standard $H^1(\domain)$ norm, i.e. there exists a constant $\Cequiv \in \Re$ such that
\begin{align}\label{eq:normEquivalence}
	\Cequiv^{-1} \norm{\testfn}_1 \leq \triplenorm{\testfn} \leq \Cequiv \norm{\testfn}_1,
\end{align}
for all $\testfn \in H^1_0(\domain)$.

The problem can therefore be written in the weak form: find $u \in L^{2}(0,\finaltime; H^1_0(\domain))$ with $u_t \in L^{2}(0,\finaltime; H^{-1}(\domain))$ such that
\begin{align}
	(u_t(t), \testfn) + \weakform(u(t), \testfn) = (\force(t), \testfn) \text{ for all } \testfn \in H^1_0(\domain) \text{ and a.e. } t \in [0,\finaltime].
	\label{eq:continuousProblem}
\end{align}
Standard arguments ensure that this problem posesses a unique solution, cf.~\cite{Evans:2010ec}.

\section{Finite element discretisation}\label{sec:discretisation}

We now recall the ingredients of a conventional conforming finite element discretisation of~\eqref{eq:continuousProblem}.

\subsection{Discrete function space}
Suppose $\{t^{\currstep}\}_{\currstep = 0}^{\nsteps}$ forms a partition of $[0,\finaltime]$, with $t^{\currstep} = \currstep \timestep$ for $\currstep = 0,\dots,\nsteps$, where $\timestep = \frac{\finaltime}{\nsteps}$.
Let $\mesh$ be a shape-regular partition of the domain $\domain \subset \Re^{\spacedim}$ into non-overlapping elements which are either $\spacedim$-dimensional simplicies or hypercubes.
To avoid extra technicalities, we suppose that the elements in the mesh are either all simplicies or all hypercubes.
We denote by $\sides$ the \emph{skeleton} of the mesh $\mesh$; for $\domain \subset \Re^2$ this is the set of element edges, while for $\domain \subset \Re^3$ it is the set of element faces.
We further introduce the mesh dependent norms
\begin{align*}
	\norm{\cdot}_{\mesh} := \Big( \sum_{\el \in \mesh} \norm{ \cdot }_{L^2(\el)}^2 \Big)^{1/2} 
	\quad \text{ and } \quad
	\norm{\cdot}_{\sides} := \Big( \sum_{\side \in \sides} \norm{ \cdot }_{L^2(\side)}^2 \Big)^{1/2} ,
\end{align*}
and the piecewise constant mesh size function $h : \domain \to \Re$ such that $h|_{\el} = \operatorname{diam}(\el)$ for each $\el \in \mesh$ and $h|_{\side} = \operatorname{diam}(\side)$ for each $\side \in \sides$.

The conventional conforming finite element function spaces with respect to $\mesh$ are then given by
\begin{align*}
	\fespace = \{ v \in H^1(\domain) : v|_{\el} \in \fespace_{\el} \quad \forall \el \in \mesh \}
	\quad \text{ and } \quad
	\fespacebc = \fespace \cap H^1_0(\domain),
\end{align*}
where
\begin{equation*}
	\fespace_{\el} := 
	\begin{cases}
		\pbasis[\el]{\degree} &\text{if } \el \text{ is triangular}, 
		\\
		\qbasis[\el]{\degree} &\text{if } \el \text{ is quadrilateral}.
	\end{cases}
\end{equation*}
Here, $\pbasis[\el]{\degree}$ denotes the space of polynomials of total degree $\degree$ on $\el$, and $\qbasis[\el]{\degree}$ denotes the space of tensor-product polynomials of maximum degree $\degree$ on $\el$.

The assumptions on the mesh and discrete space above ensure the existence of a \emph{Cl\'ement-type interpolation operator}, satisfying the following approximation estimate.

\begin{lemma}[Cl\'ement-type interpolation estimate, cf.~\cite{Clement:1975te}]\label{lem:clementEstimates}
	For any $v \in H^1_0(\domain)$ there exists $v_I \in \fespacebc$ such that
	\begin{align*}
		\abs{v_I}_{H^1(\el)} &\leq \Cclem \abs{v}_{H^1(\widehat{\el})}
		\\
		\norm{h^{-1}(v - v_I)}_{\el} &\leq \Cclem \abs{v}_{H^1(\widehat{\el})}
		\\
		\norm{h^{-1/2}(v - v_I)}_{\side} &\leq \Cclem \abs{v}_{H^1(\widehat{\side})}
	\end{align*}
	for all $\el \in \mesh$ and $\side \in \sides$, where 
	$\Cclem$ is a positive constant depending only on the shape regularity of $\mesh$.
	Here, $\widehat{\el}$ and $\widehat{\side}$ denote the usual \emph{finite element patch} of $\el$ and $\side$, respectively, formed of all mesh elements with which they share a vertex.
\end{lemma}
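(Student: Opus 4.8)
The plan is to construct $v_I$ explicitly as a quasi-interpolant of Scott--Zhang type and then to deduce all three estimates from local polynomial approximation results transplanted to a reference configuration by a standard scaling argument. For each Lagrange node $z$ associated with $\fespace$, fix a single mesh entity $\sigma_z$ containing $z$, chosen to be a boundary face $\side \subset \boundary$ whenever $z$ lies on $\boundary$ and an element $\el \in \mesh$ otherwise, and construct on $\sigma_z$ a function $\psi_z$ dual to the nodal basis, i.e. satisfying $\int_{\sigma_z} \psi_z \phi_{z'} = \delta_{z z'}$ for every node $z'$ of $\sigma_z$, where $\{\phi_z\}$ denotes the Lagrange basis of $\fespace$. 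Setting
\begin{align*}
	v_I := \sum_z \Big( \int_{\sigma_z} \psi_z v \Big) \phi_z
\end{align*}
defines a linear operator on $H^1(\domain)$ with values in $\fespace$, which reproduces every $w \in \fespace$ (hence every polynomial locally), whose restriction to an element $\el$ depends only on the values of $v$ on the patch $\widehat{\el}$, and which maps $H^1_0(\domain)$ into $\fespacebc$: if $z \in \boundary$ then $\sigma_z$ is a boundary face, so the trace of $v \in H^1_0(\domain)$ on $\sigma_z$ vanishes and the coefficient of $\phi_z$ is zero.

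The estimates then follow from a scaling and Bramble--Hilbert argument. Fix $\el \in \mesh$ and let $p$ be a best approximation of $v$ in $H^1(\widehat{\el})$ from the relevant local polynomial space ($\pbasis[\widehat{\el}]{\degree}$ or its tensor-product analogue). Since $v_I$ reproduces polynomials, on $\el$ we have $v - v_I = (v - p) - (v - p)_I$, so it suffices to bound $\norm{h^{-1}(v-p)}_{\el} + \norm{h^{-1}(v-p)_I}_{\el}$, the seminorm $\abs{(v-p)_I}_{H^1(\el)}$, and $\norm{h^{-1/2}(v-p)}_{\side} + \norm{h^{-1/2}(v-p)_I}_{\side}$. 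Transforming $\widehat{\el}$ affinely onto a patch of unit diameter, shape-regularity of $\mesh$ ensures that only finitely many reference patch shapes arise up to affine equivalence and that all element diameters within $\widehat{\el}$ are comparable to $h|_{\el}$; on the reference patch, the $L^2$-continuity of the averaging functionals defining $v_I$, together with standard trace and inverse inequalities, bounds the $H^1$ norm of $(v-p)_I$ on the reference patch by the $H^1$ norm of $v-p$ there, while the Bramble--Hilbert lemma bounds the latter by the $H^1$ seminorm of $v$ on the reference patch. Scaling back and collecting the powers of $h$ yields the three claimed inequalities, with $\Cclem$ depending only on the shape-regularity constant through the finite catalogue of reference configurations and the associated inverse and trace constants.

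The main, and essentially only, delicate point is to perform the construction so that it simultaneously respects the homogeneous boundary condition and handles the general polynomial degree $\degree$: the dual entities $\sigma_z$ must be assigned so that every boundary node is paired with a boundary face --- which is what forces $v_I \in \fespacebc$ --- while still keeping $\sigma_z \subset \widehat{\el}$ for every node $z$ of $\el$, which is what provides the locality exploited in the scaling argument. Verifying that shape-regularity controls the cardinality of each patch $\widehat{\el}$ and the ratios of diameters within it, so that the reference catalogue is finite and all constants are uniform, is routine but is precisely what makes $\Cclem$ independent of the mesh size. Full details may be found in~\cite{Clement:1975te} and its refinements.
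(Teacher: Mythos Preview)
The paper does not prove this lemma at all: it is stated as a known result with a reference to Cl\'ement's original paper and is used as a black box throughout. Your sketch via the Scott--Zhang quasi-interpolant is a correct and standard way to establish the result, and in fact is slightly more refined than the original Cl\'ement construction (which averages over element patches and does not automatically preserve homogeneous Dirichlet data). Since the paper offers nothing to compare against beyond the citation, there is no methodological divergence to discuss; your argument would serve perfectly well as a proof here.
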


Finally, for quantities which may be discontinuous across the mesh skeleton, we
define the jump operator $\jump{\cdot}$ across a mesh interface $\side\in\sides$ as follows. 
If $\side \cap \boundary = \emptyset$, then there exist $\el^{+}$ and $\el^{-}$ such that $\side \subset \partial\el^{+}\cap\partial\el^{-}$.
Denote by $\vec{v}^{\pm}$ the trace of the vector valued function $\vec{v}$ on $\side$ from within $\el^{\pm}$ and $\vec{n}_\side^{\pm}$ the unit outward normal on $\side$ with respect to $\el^{\pm}$.
Then, $\jump{\vec{v}}:=\vec{v}^+ \cdot \vec{n}_{\side}^{+} + \vec{v}^- \cdot \vec{n}_{\side}^{-}$.
On the other hand, if $\side \cap \boundary = \emptyset$ then $\jump{\vec{v}} = 0$.

\subsection{Discrete differential operators}
We define the \emph{discrete spatial operator} $\diffoph[\currstep] : \fespacebc[\currstep] \to \fespacebc[\currstep]$ to satisfy
\begin{align*}
	(\diffoph[\currstep] \wh, \testfnh[\currstep]) = - \weakform(\wh, \testfnh[\currstep]) \quad \forall \testfnh[\currstep] \in \fespacebc[\currstep],
\end{align*}
and, for each $n \in \{1,\dots,N\}$, the \emph{discrete time derivative} $\timederivh^{\currstep} : \fespace \to \fespace$, defined for a set of functions $\{\wh[\currstep]\}_{\currstep = 0}^{\nsteps} \subset \fespace[\currstep]$ by
\begin{align*}
	\timederivh^{\currstep} \wh[\currstep] := \frac{\wh[\currstep] - \wh[\prevstep]}{\timestep}.
\end{align*}
For the remainder of the article, we shall use the shorthand notation $\timederivh \equiv \timederivh^{\currstep}$ for brevity.
We also introduce the $L^2(\domain)$-orthogonal projectors $\ltwoproj : L^2(\domain) \to \fespace$ and $\ltwoprojbc : L^2(\domain) \to \fespacebc$, which satisfy
\begin{align*}
	(\ltwoproj v - v, \testfnh) = 0 \quad \forall \testfnh \in \fespace
	\qquad \text{ and } \qquad
	(\ltwoprojbc v - v, \testfnh) = 0 \quad \forall \testfnh \in \fespacebc,
\end{align*}
respectively.
The key difference between these is that the latter imposes zero Dirichlet boundary data.

\subsection{Discrete numerical schemes}
We now introduce the backward Euler and Crank-Nicolson schemes for computing discrete approximations to the solution of~\eqref{eq:continuousProblem}.
With clear ambiguity we shall refer to the solutions obtained at the time nodes $t^{\currstep}$ using either method as $\{\U[\currstep]\}_{\currstep = 0}^{\nsteps} \subset \fespacebc[\currstep]$, since throughout it shall be clear from the context which we are referring to.

\subsubsection{Backward Euler time discretisation}
The backward Euler method for approximating solutions to~\eqref{eq:continuousProblem} may be expressed as:
find the sequence of finite element functions $\{\U[\currstep]\}_{\currstep = 0}^{\nsteps} \subset \fespacebc[\currstep]$ satisfying
\begin{align*}
	\U[0] &= \interp u_0,
	\\
	(\timederivh \U[\currstep], \testfnh[\currstep]) + \weakform(\U[\currstep], \testfnh[\currstep]) &= (\force[\currstep], \testfnh[\currstep]) \quad \text{ for all } \testfnh[\currstep] \in \fespacebc[\currstep] \text{ and } \currstep = 1,\dots,\nsteps,
\end{align*}
where $\force[\currstep] := \force(t^{\currstep})$ and $\interp : L^2(\domain) \to \fespacebc[0]$ is a suitable interpolation or projection operator into the finite element space.
Using the definition of the discrete spatial operator, this numerical scheme may equivalently be written in the pointwise form: 
find $\{\U[\currstep]\}_{\currstep = 0}^{\nsteps} \subset \fespacebc[\currstep]$ with $\U[0] = \interp u_0$, and
\begin{align*}
	\timederivh \U[\currstep] - \diffoph \U[\currstep] &= \forcehbc[\currstep],
\end{align*}
for $\currstep = 1,\dots,\nsteps$.

\subsubsection{Crank-Nicolson time discretisation}
The Crank-Nicolson method for finding approximate solutions of~\eqref{eq:continuousProblem} may be expressed as:
find $\{\U[\currstep]\}_{\currstep = 0}^{\nsteps} \subset \fespacebc[\currstep]$ satisfying
\begin{align*}
	\U[0] &= \interp u_0,
	\\
	(\timederivh \U[\currstep], \testfnh[\currstep]) + \frac{1}{2}\weakform ( \U[\currstep] + \U[\prevstep], \testfnh[\currstep] ) &= ( \force[\midstep], \testfnh[\currstep] ) \quad \text{ for all } \testfnh[\currstep] \in \fespacebc[\currstep] \text{ and } \currstep = 1,\dots,\nsteps,
\end{align*}
where 
$\force[\midstep] = \force \big(\frac{t^{\currstep} + t^{\prevstep}}{2} \big)$ and $\interp : L^2(\domain) \to \fespacebc[0]$ is a suitable interpolation or projection operator into the finite element space.
As with the backward Euler method, we may write this scheme in the equivalent pointwise form:  
find $\{\U[\currstep]\}_{\currstep = 0}^{\nsteps} \subset \fespacebc[\currstep]$ with $\U[0] = \interp u_0$, and
\begin{align*}
	\timederivh \U[\currstep] - \frac{1}{2} \diffoph \U[\currstep] - \frac{1}{2} \diffoph \U[\prevstep] &= \forcehbc[\midstep],
\end{align*}
for $\currstep = 1,\dots,\nsteps$.

\section{A posteriori error estimation}\label{sec:aposteriori}
To derive the $L^{\infty}(0,t; L^2(\domain))$ error estimates for the backward Euler and Crank-Nicolson schemes of Theorem~\ref{thm:finalestimate}, we proceed in several stages.
We begin in Section~\ref{sec:reconstructions} by introducing the space, time, and space-time reconstructions of the discrete solutions which will be required to derive the error estimates. 
These are used in Section~\ref{sec:errorEquations} to derive differential equations satisfied by the error which have controllable right hand sides and thus form the basis of our error estimates.
Some tools for producing error estimates composed of very flexible varieties of time accumulation are discussed in Section~\ref{sec:timeAccumulations}, and in Section~\ref{sec:parabolicError} these are applied to the error equations of Section~\ref{sec:errorEquations} to derive estimates on the parabolic component of the error.
Finally, in Section~\ref{sec:finalEstimates} we put the pieces together to derive estimates on the total error.

\subsection{Space-time reconstruction operators}\label{sec:reconstructions}

The forthcoming analysis rests on the use of various reconstruction operators in space and time, which we define here.
We begin by defining the spatial reconstruction we use, which is the celebrated elliptic reconstruction operator of~\cite{Makridakis:2003ws}.

\begin{definition}[Elliptic reconstruction]
Let $\eliprecon[], \eliprecon[\currstep] : \fespacebc \to H^1_0(\domain)$ denote the \emph{elliptic reconstruction operators}, respectively satisfying
\begin{align*}
	\weakform(\eliprecon[] \alttestfn_h, \testfn) = (- \diffoph \alttestfn_h, \testfn)
	\quad \text{ and } \quad
	\weakform(\eliprecon[\currstep] \alttestfn_h, \testfn) = (- \diffoph \alttestfn_h + \forceh[\currstep] - \forcehbc[\currstep], \testfn)
\end{align*}
for all $\testfn \in H^1_0(\domain)$.
\end{definition}

We note that the definitions of $\eliprecon[]$ and $\eliprecon[\currstep]$ coincide when $\forceh[\currstep]|_{\boundary} = 0$, since in this case $\forceh[\currstep] - \forcehbc[\currstep] = 0$.
When $\forceh[\currstep]|_{\boundary} \neq 0$, however, the second definition is required to ensure that the data oscillation term in the final error estimate is of optimal order.

The key property of the elliptic reconstruction operator is that any $\alttestfn_h \in \fespacebc$ can now be seen as satisfying the finite element discretisation of the elliptic problem satisfied by $\eliprecon[] \alttestfn_h$.
This allows us to utilise the well developed literature on a posteriori error estimates for elliptic problems in order to derive bounds on quantities of the form $\norm{\alttestfn_h - \eliprecon[] \alttestfn_h}$.
To demonstrate this, in Lemma~\ref{lem:ellipreconBound} we provide an example of a residual-type a posteriori error estimate, which may be proven using standard techniques (see, for example,~\citet{Ainsworth:2000cv,Verfurth:uz}, etc.).
It is worth stressing that the choice of a residual-type bound here is by no means the only possibility --- a great strength of the elliptic reconstruction technique is that any methodology for designing a posteriori estimates for elliptic problems can be applied here.

\begin{lemma}[Elliptic reconstruction error estimate]\label{lem:ellipreconBound}
	There exists a constant $\Celip$, depending only on the domain $\domain$, the problem data, and the regularity of the mesh $\mesh$, such that for any $\alttestfnh \in \fespacebc$, the elliptic reconstruction operators $\eliprecon[]$ and $\eliprecon[\currstep]$ satisfy
	\begin{align*}
		\norm{\alttestfnh - \eliprecon[] \alttestfnh} 
		&\leq 
		\ellipest(\alttestfnh) 
		:= 
		\Celip \big(\norm{ h^2 (\diffoph \alttestfnh - \diffop \alttestfnh)}_{\mesh}  
		+ \norm{ h^{3/2} \jump{ A \nabla \alttestfn_h } }_{\sides} \big),
	\end{align*}
	and
	\begin{align*}
		\norm{\alttestfnh - \eliprecon[\currstep] \alttestfnh} 
		&\leq 
		\ellipest(\alttestfnh) 
		:= 
		\Celip \big(\norm{ h^2 (\diffoph \alttestfnh + \forcehbc[\currstep] - \forceh[\currstep] - \diffop \alttestfnh)}_{\mesh}  
		+ \norm{ h^{3/2} \jump{ A \nabla \alttestfn_h } }_{\sides} \big),
	\end{align*}
	respectively. 
\end{lemma}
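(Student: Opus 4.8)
The plan is to run the classical duality (Aubin--Nitsche) argument for $L^2$-norm a posteriori estimates, exploiting that any $\alttestfnh \in \fespacebc$ is precisely the conforming finite element solution of the elliptic problem whose exact solution is its reconstruction, so that Galerkin orthogonality is available. I would treat both statements together, writing $e := \alttestfnh - \eliprecon[\currstep] \alttestfnh$; the bound for $\eliprecon[]$ is recovered by deleting $\forceh[\currstep]$ and $\forcehbc[\currstep]$ throughout. First I would introduce the dual problem: let $z \in H^1_0(\domain)$ solve $\weakform(\testfn, z) = (e, \testfn)$ for all $\testfn \in H^1_0(\domain)$, which by symmetry of $\weakform$ is the same as $\weakform(z, \testfn) = (e, \testfn)$. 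Since $\domain$ is convex, elliptic regularity gives $z \in H^2(\domain)$ with $\norm{z}_2 \leq C \norm{e}$, where $C$ depends only on $\domain$ and the coefficients of $\diffop$. Testing the dual problem with $\testfn = e$ and inserting the definition of $\eliprecon[\currstep]$ then gives
\begin{align*}
	\norm{e}^2 = \weakform(e, z) = \weakform(\alttestfnh, z) - \weakform(\eliprecon[\currstep] \alttestfnh, z) = \weakform(\alttestfnh, z) + (\diffoph \alttestfnh + \forcehbc[\currstep] - \forceh[\currstep], z).
\end{align*}

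Next I would exploit Galerkin orthogonality. For any $z_I \in \fespacebc$ the definition of the discrete operator gives $(\diffoph \alttestfnh, z_I) = -\weakform(\alttestfnh, z_I)$, while $(\forcehbc[\currstep] - \forceh[\currstep], z_I) = 0$, since both $\ltwoprojbc \force[\currstep]$ and $\ltwoproj \force[\currstep]$ reproduce $(\force[\currstep], z_I)$ when tested against $z_I \in \fespacebc \subset \fespace$ --- this last identity is exactly what the modified reconstruction $\eliprecon[\currstep]$ is designed to provide. Hence the right-hand side above vanishes when $z$ is replaced by $z_I$, and subtracting and integrating by parts element by element in $\weakform(\alttestfnh, z - z_I) = (A \nabla \alttestfnh, \nabla(z-z_I)) + (\mu \alttestfnh, z-z_I)$ --- which shifts the derivative off $\alttestfnh$, produces the volume term $-\diffop \alttestfnh$ on each $\el$, and assembles the interface contributions into jumps $\jump{A \nabla \alttestfnh}$, with the faces on $\boundary$ contributing nothing as $z - z_I \in H^1_0(\domain)$ --- yields
\begin{align*}
	\norm{e}^2 = \sum_{\el \in \mesh} \int_{\el} \big(\diffoph \alttestfnh + \forcehbc[\currstep] - \forceh[\currstep] - \diffop \alttestfnh\big)(z - z_I) + \sum_{\side \in \sides} \int_{\side} \jump{A \nabla \alttestfnh}(z - z_I).
\end{align*}

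Finally I would choose $z_I \in \fespacebc$ to be a Cl\'ement-type quasi-interpolant of $z$, for which the second-order estimates $\norm{h^{-2}(z-z_I)}_{\el} \leq C \abs{z}_{H^2(\widehat{\el})}$ and $\norm{h^{-3/2}(z-z_I)}_{\side} \leq C \abs{z}_{H^2(\widehat{\side})}$ hold --- the higher-regularity counterpart of Lemma~\ref{lem:clementEstimates}, obtained by the same construction. Applying the Cauchy--Schwarz inequality on each element and each face, then a discrete Cauchy--Schwarz inequality together with the finite overlap of the patches $\widehat{\el}$ and $\widehat{\side}$, produces
\begin{align*}
	\norm{e}^2 \leq C \big( \norm{h^2(\diffoph \alttestfnh + \forcehbc[\currstep] - \forceh[\currstep] - \diffop \alttestfnh)}_{\mesh} + \norm{h^{3/2} \jump{A \nabla \alttestfnh}}_{\sides} \big) \abs{z}_{H^2(\domain)}.
\end{align*}
Bounding $\abs{z}_{H^2(\domain)} \leq \norm{z}_2 \leq C \norm{e}$ and dividing by $\norm{e}$ gives the asserted estimate, with $\Celip$ absorbing the constants from elliptic regularity, interpolation, and patch overlap; the bound for $\eliprecon[]$ follows verbatim after removing $\forceh[\currstep]$ and $\forcehbc[\currstep]$. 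The step demanding the most care is the elliptic regularity bound $\norm{z}_2 \leq C\norm{e}$, which is where convexity of $\domain$ enters (a reentrant domain would require the weighted estimates of the cited references). The only point genuinely new relative to the textbook residual estimate is verifying that $\forcehbc[\currstep] - \forceh[\currstep]$ is $L^2$-orthogonal to $\fespacebc$, so that it appears purely as a correction to the element residual rather than as a spurious consistency term.
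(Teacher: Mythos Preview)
Your argument is correct and is exactly the standard residual-type $L^2$ a~posteriori estimate via Aubin--Nitsche duality that the paper has in mind. In fact, the paper does not give a proof of this lemma at all: it simply states that the bound ``may be proven using standard techniques'' and refers to the textbooks of Ainsworth--Oden and Verf\"urth. Your write-up supplies precisely that standard proof, including the one point that is specific to the present setting and not in those textbooks, namely the observation that $\forcehbc[\currstep] - \forceh[\currstep]$ is $L^2$-orthogonal to $\fespacebc$, so that the modified reconstruction $\eliprecon[\currstep]$ still enjoys Galerkin orthogonality and the extra data term enters only through the element residual. The use of convexity for the $H^2$ regularity of the dual solution is also correctly identified as the crucial hypothesis.
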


To introduce the time reconstructions which will also be required for the final error estimates, we first introduce the so-called \emph{temporal hat functions}, continuous linear functions $\lcurr : [0,T] \to [0,1]$ for each $\currstep = 0,\dots,N$ satisfying $\ell^{i}(t^{j}) = \delta_{ij}$, where $\delta_{ij}$ is Kronecker's delta.
These are defined as
\begin{align*}
	\lcurr(t) = 
	\begin{cases}
		\frac{t - t^{\prevstep}}{\timestep} & \text{ for } t \in [t^{\prevstep}, t^{\currstep}],
		\\
		\frac{t^{\currstep+1} - t}{\timestep} & \text{ for } t \in [t^{\currstep}, t^{\currstep+1}],
		\\
		\quad 0 & \text{ otherwise},
	\end{cases}
\end{align*}
and will be used to patch together the nodal discrete solutions $\U[\currstep]$ provided by the discrete schemes, through the piecewise-polynomial time and space-time reconstructions given in Definition~\ref{def:timeReconstructions}.
The quadratic reconstruction we adopt here is a slight simplification of the `two-point reconstruction' studied by~\cite{Akrivis:2006hw,Lozinski:2009bt,Bansch:2012gu,Bansch:2013cu}.

\begin{definition}[Time and space-time reconstructions]\label{def:timeReconstructions}
	Let $\{\U[\currstep]\}_{\currstep = 0}^{\nsteps} \subset \fespacebc[\currstep]$ denote the set of discrete approximate solutions produced by either the backward Euler or Crank-Nicolson scheme.
	We let $\lintimerecon, \quadtimerecon : [0,T] \to \fespacebc$ denote the \emph{linear} and \emph{quadratic time reconstructions}, given by
	\begin{align*}
		\lintimerecon(t) = \lcurr(t) \U[\currstep] + \lprev(t) \U[\prevstep]
		\quad\text{ and }\quad
		\quadtimerecon(t) = \lintimerecon(t) - \frac{\timestep^2}{2} \lcurr(t) \lprev(t) \timederivh (\diffoph[\currstep] \U[\currstep] + \forcehbc[\currstep]),
	\end{align*}
	respectively, where $t \in [t^{\prevstep}, t^{\currstep}]$ for each $\currstep \in \{1,\dots,N\}$.
	We further let $\linspacetimerecon, \quadspacetimerecon : [0,T] \to \fespacebc$ denote the \emph{linear} and \emph{quadratic space-time reconstructions}, given by
	\begin{align*}
		\linspacetimerecon(t) = \lcurr(t) \eliprecon[\currstep] \U[\currstep] + \lprev(t) \eliprecon[\prevstep] \U[\prevstep]
		\quad\text{ and }\quad
		\quadspacetimerecon(t) = \linspacetimerecon(t) - \frac{\timestep^2}{2} \lcurr(t) \lprev(t) \eliprecon[] \timederivh (\diffoph[\currstep] \U[\currstep] + \forcehbc[\currstep]),
	\end{align*}
	respectively, where $t \in [t^{\prevstep}, t^{\currstep}]$ for each $\currstep \in \{1,\dots,N\}$.
\end{definition}
We remark that all of these reconstructions are continuous in time. 
Indeed, for each $\currstep \in \{0,\dots,N\}$ the time reconstructions satisfy 
$\lintimerecon(t^{\currstep}) = \quadtimerecon(t^{\currstep}) = \U[\currstep]$, while the space-time reconstructions satisfy
 $\linspacetimerecon(t^{\currstep}) = \quadspacetimerecon(t^{\currstep}) = \eliprecon[\currstep] \U[\currstep]$.

\subsection{Error equations}\label{sec:errorEquations}
The final error estimate of Theorem~\ref{thm:finalestimate} will be proven by splitting the error into components using the triangle inequality with the reconstructions of Definition~\ref{def:timeReconstructions}, and controlling each term separately.
For the backward Euler method, this splitting is
\begin{align}\label{eq:be:splitting}
	\norm{u - \lintimerecon}_{L^{\infty}(0,t;L^2(\domain))} \leq \norm{u - \linspacetimerecon}_{L^{\infty}(0,t;L^2(\domain))} + \norm{\linspacetimerecon - \lintimerecon}_{L^{\infty}(0,t;L^2(\domain))},
\end{align}
while for the Crank-Nicolson method the splitting is
\begin{align}\label{eq:cn:splitting}
	\norm{u - \lintimerecon}_{L^{\infty}(0,t;L^2(\domain))} \leq \norm{u - \quadspacetimerecon}_{L^{\infty}(0,t;L^2(\domain))} + \norm{\quadspacetimerecon - \quadtimerecon}_{L^{\infty}(0,t;L^2(\domain))} + \norm{\quadtimerecon - \lintimerecon}_{L^{\infty}(0,t;L^2(\domain))}.
\end{align}
The first component in each case is known as the \emph{parabolic error}, and measures the error between the true solution $u$ and the reconstruction.
The focus of this section is to derive an appropriate \emph{error equation}, a differential equation satisfied by the parabolic error, which will be used to bound the first term of each splitting.
This is performed separately in Section~\ref{sec:be:errorEquation} for the backward Euler scheme and in Section~\ref{sec:cn:errorEquation} for the Crank-Nicolson scheme.
The second term of each splitting, known as the \emph{space reconstruction error}, measures the difference between a discrete solution and its reconstruction, and may be bounded using Lemma~\ref{lem:ellipreconBound}.
Finally, the third term of the Crank-Nicolson splitting measure the \emph{time reconstruction error}, which is bounded by directly using the definition of the quadratic time reconstruction.

\subsubsection{Backward Euler method}
\label{sec:be:errorEquation}

Invoking the definition of the discrete Laplacian and linear time reconstruction, we observe that the discrete solutions produced by the backward Euler scheme satisfy
\begin{align*}
	\lintimerecon_t(t) - \diffoph \lintimerecon[\currstep] = \forcehbc[\currstep],
\end{align*}
or, using the definition of the elliptic reconstruction,
\begin{align}\label{eq:be:reconstructedForm}
	(\lintimerecon_t(t), v) + \weakform(\eliprecon[\currstep] \lintimerecon[\currstep], v) = (\forceh[\currstep], v) \qquad \forall v \in H^1_0(\domain).
\end{align}
We observe that the effect of including projections of the forcing data in the definition of the reconstruction is that right hand side of this differential equation no longer involves the projection of $\force$ onto functions with zero boundary data, a fact which is crucial for obtaining error estimates with optimal order data approximation terms.

Subtracting~\eqref{eq:be:reconstructedForm} from the weak form~\eqref{eq:continuousProblem} of the PDE, we deduce that the \emph{backward Euler parabolic error} $\xi(t) := u(t) - \linspacetimerecon(t)$ satisfies the error equation
\begin{align}
	(\xi_t, \testfn) + \weakform(\xi, \testfn) 
	&= 
	(\lintimerecon_t - \eliprecon[] \lintimerecon_t, v) + \weakform(\eliprecon \U[\currstep] - \linspacetimerecon(t), \testfn) + (\force - \forceh[\currstep], \testfn)
	\label{eq:be:errorequation}
\end{align}
for any $\testfn \in H^1_0(\domain)$.

The terms on the right hand side of~\eqref{eq:be:errorequation} may be controlled by the (computable) error estimator functionals given in Definition~\ref{def:be:estimatorterms}, as shown in Lemma~\ref{lem:be:individualterms}.
To keep the notation from becoming too obscure, we denote each component of the estimator by the first letter of its name in a calligraphic font, with the subscript `BE' to indicate that it relates to the backward Euler method.

\begin{definition}[Terms of the backward Euler error estimate]\label{def:be:estimatorterms}
	Let $\currstep \in \{1,\dots,N\}$ and $t \in [t^{\prevstep}, t^{\currstep}]$.
	We define the \emph{space error estimator} and \emph{elliptic reconstruction error estimator} as
	\begin{align*}
		\spaceest{BE}(t) = \ellipest(\lintimerecon_t(t)),
		\quad\text{ and }\quad
		\ellipreconest{BE}(t) = \ellipest(\lintimerecon(t)),
	\end{align*}
	respectively, with $\ellipest$ denoting the elliptic error estimator from Lemma~\ref{lem:ellipreconBound}, the \emph{time error estimator} as
	\begin{align*}
		\timeest{BE}(t) = \timestep \norm{\timederivh (\diffoph[\currstep]\U[\currstep] - \forceh[\currstep] + \forcehbc[\currstep])},
	\end{align*}
	and the \emph{data approximation error estimators} for \emph{time} and \emph{space} as
	\begin{align*}
		\dataest{T, BE}(t) = \norm{\force(t) - \force[\currstep]},
		\qquad\text{ and }\qquad
		\dataest{S, BE}(t) = \Cclem \norm{h (\force[\currstep] - \forceh[\currstep])},
	\end{align*}
	respectively.
\end{definition}

\begin{lemma}[Estimates for individual backward Euler error equation terms]\label{lem:be:individualterms}
	Let $n \in \{1,\dots,N\}$ and $t \in [t^{\prevstep}, t^{\currstep}]$. 
	Then, the terms of the Crank-Nicolson error equation~\eqref{eq:be:errorequation} may be bounded with separate contributions from the spatial and temporal errors
	\begin{align*}
		((\lintimerecon - \linspacetimerecon)_{t}(t), v) \leq \spaceest{BE}(t) \norm{v}, 
		\quad \text{ and } \quad
		\weakform(\eliprecon[\currstep] \U[\currstep] - \linspacetimerecon(t), v) \leq \timeest{BE}(t) \norm{v},
	\end{align*}
	respectively,
	and the data approximation error
	\begin{align*}
		(\force(t) - \forceh[\currstep], \ctserr) &\leq \dataest{T, BE}(t) \norm{v} + \dataest{S, BE}(t) \triplenorm{v}.
	\end{align*}
\end{lemma}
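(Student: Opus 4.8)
The plan is to bound each of the three right-hand side terms of the error equation \eqref{eq:be:errorequation} in turn, matching them against the estimator components from Definition~\ref{def:be:estimatorterms}. Each bound follows from rewriting the quantity in a form to which either Lemma~\ref{lem:ellipreconBound} or the Cl\'ement estimate of Lemma~\ref{lem:clementEstimates} applies, together with a Cauchy--Schwarz inequality.

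First I would treat the term $((\lintimerecon - \linspacetimerecon)_t(t), v)$. On the interval $[t^{\prevstep}, t^{\currstep}]$ the time reconstructions are affine in $t$, so $\lintimerecon_t = \timederivh \U[\currstep]$ (a fixed element of $\fespacebc$), and since $\eliprecon[]$ commutes with the (time-independent) operation of taking this difference quotient, $\linspacetimerecon_t = \eliprecon[] \timederivh \U[\currstep] = \eliprecon[] \lintimerecon_t$. Hence $(\lintimerecon - \linspacetimerecon)_t(t) = \lintimerecon_t(t) - \eliprecon[] \lintimerecon_t(t)$, and applying the first bound of Lemma~\ref{lem:ellipreconBound} with $\alttestfnh = \lintimerecon_t(t)$ followed by Cauchy--Schwarz gives $((\lintimerecon - \linspacetimerecon)_t(t), v) \le \norm{\lintimerecon_t(t) - \eliprecon[] \lintimerecon_t(t)} \norm{v} \le \ellipest(\lintimerecon_t(t)) \norm{v} = \spaceest{BE}(t) \norm{v}$.

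Next, for the term $\weakform(\eliprecon[\currstep] \U[\currstep] - \linspacetimerecon(t), v)$, I would use that $\linspacetimerecon(t) = \lcurr(t) \eliprecon[\currstep] \U[\currstep] + \lprev(t) \eliprecon[\prevstep] \U[\prevstep]$ with $\lcurr(t) + \lprev(t) = 1$, so $\eliprecon[\currstep]\U[\currstep] - \linspacetimerecon(t) = \lprev(t)\big(\eliprecon[\currstep]\U[\currstep] - \eliprecon[\prevstep]\U[\prevstep]\big) = \timestep \lprev(t) \timederivh (\eliprecon[\currstep]\U[\currstep])$. Testing the definition of the elliptic reconstruction $\eliprecon[\currstep]$ against $v$, one has $\weakform(\eliprecon[\currstep]\U[\currstep] - \eliprecon[\prevstep]\U[\prevstep], v) = (- \diffoph[\currstep]\U[\currstep] + \forceh[\currstep] - \forcehbc[\currstep] + \diffoph[\prevstep]\U[\prevstep] - \forceh[\prevstep] + \forcehbc[\prevstep], v)$; dividing by $\timestep$ identifies the $L^2$ function $\timederivh(-\diffoph[\currstep]\U[\currstep] + \forceh[\currstep] - \forcehbc[\currstep])$, so that $\weakform(\eliprecon[\currstep]\U[\currstep] - \linspacetimerecon(t), v) = \timestep \lprev(t)\, (\timederivh(-\diffoph[\currstep]\U[\currstep] + \forceh[\currstep] - \forcehbc[\currstep]), v)$. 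Since $0 \le \lprev(t) \le 1$, Cauchy--Schwarz yields the bound $\timeest{BE}(t)\norm{v}$. The data approximation term $(\force(t) - \forceh[\currstep], v)$ I would split as $(\force(t) - \force[\currstep], v) + (\force[\currstep] - \forceh[\currstep], v)$; the first piece is bounded directly by $\dataest{T,BE}(t)\norm{v}$ via Cauchy--Schwarz, while for the second I exploit that $\forceh[\currstep] = \ltwoproj \force[\currstep]$ is an $L^2$-projection onto $\fespace \supset \fespacebc$, so $(\force[\currstep] - \forceh[\currstep], v) = (\force[\currstep] - \forceh[\currstep], v - v_I)$ for any $v_I \in \fespacebc$; choosing the Cl\'ement interpolant of $v$ and applying the second estimate of Lemma~\ref{lem:clementEstimates} elementwise together with the norm equivalence \eqref{eq:normEquivalence} produces $\Cclem \norm{h(\force[\currstep] - \forceh[\currstep])}\,\triplenorm{v} = \dataest{S,BE}(t)\triplenorm{v}$.

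The main obstacle I anticipate is purely bookkeeping rather than conceptual: one must verify carefully that the elliptic reconstruction genuinely commutes with the discrete time-difference operator and with scalar multiplication by the hat functions (so that $\linspacetimerecon_t = \eliprecon[] \lintimerecon_t$ and the identity for $\eliprecon[\currstep]\U[\currstep] - \linspacetimerecon$ hold), and that the forcing-data projections in the definition of $\eliprecon[\currstep]$ telescope correctly in the difference $\eliprecon[\currstep]\U[\currstep] - \eliprecon[\prevstep]\U[\prevstep]$ so that exactly the combination $\timederivh(\diffoph[\currstep]\U[\currstep] - \forceh[\currstep] + \forcehbc[\currstep])$ appears in $\timeest{BE}$. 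Once these algebraic identifications are in place, the remaining steps are routine applications of Cauchy--Schwarz, Lemma~\ref{lem:ellipreconBound}, and Lemma~\ref{lem:clementEstimates}.
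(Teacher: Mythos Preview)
Your proposal is correct and follows essentially the same route as the paper: Cauchy--Schwarz plus Lemma~\ref{lem:ellipreconBound} for the space term, the identity $\lcurr+\lprev=1$ together with the definition of $\eliprecon[\currstep]$ to unpack the time term, and the $L^2$-orthogonality of $\ltwoproj$ combined with a Cl\'ement interpolant for the data term. The bookkeeping concern you flag is genuine---$\linspacetimerecon_t$ coincides with $\eliprecon[]\lintimerecon_t$ only when $\forceh[\currstep]-\forcehbc[\currstep]$ is independent of $\currstep$---but observe that the error equation~\eqref{eq:be:errorequation} itself already carries $(\lintimerecon_t - \eliprecon[]\lintimerecon_t, v)$ as its first right-hand side term, and it is this form (rather than the $(\lintimerecon-\linspacetimerecon)_t$ written in the lemma statement) that the paper bounds directly.
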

\begin{proof}
    The first term on the right hand side of~\eqref{eq:be:errorequation} is bounded by applying the Cauchy-Schwarz inequality and Lemma~\ref{lem:ellipreconBound}.
	Since $\lprev(t) = 1 - \lcurr(t)$ for $t \in [t^{\prevstep}, t^{\currstep}]$, the second term of~\eqref{eq:be:errorequation} may be rewritten as
	\begin{align*}
		\weakform(\eliprecon[\currstep] \U[\currstep] - \linspacetimerecon, v) &= \lprev(t) \weakform(\eliprecon[\currstep] \U[\currstep] - \eliprecon[\prevstep] \U[\prevstep], v)
			= - \timestep \lprev(t) (\timederivh( \diffoph \U[\currstep] + \forcehbc[\currstep] - \forceh[\currstep]), v),
	\end{align*}
	and the result follows by applying the Cauchy-Schwarz inequality and using the fact that $\lprev(t) \leq 1$.
	
	The final term of~\eqref{eq:be:errorequation} may be bounded by adding and subtracting $\force[\currstep]$, recalling that $\forceh[\currstep] = \ltwoproj \force[\currstep]$, and exploiting the $L^2(\domain)$ orthogonality of the projector $\ltwoproj$ to introduce the Cl\'ement interpolant $v_h \in \fespacebc$ of $v$, providing
	\begin{align*}
		(\force - \forceh[\currstep], v) 
		= (\force - \force[\currstep], v) + (\force[\currstep] - \forceh[\currstep], v - v_h).
	\end{align*}
	The result then follows by applying Cauchy-Schwarz inequality and the bounds of Lemma~\ref{lem:clementEstimates}.
\end{proof}

\subsubsection{Crank-Nicolson method}
\label{sec:cn:errorEquation}
The definition of the quadratic time reconstruction operator $\quadtimerecon$ ensures that it satisfies the differential equation
\begin{align*}
	\quadtimerecon_t(t) - \diffoph \lintimerecon(t) = \forcehbc(t) + (\forcehbc[\midstep] - \forcehbc(t^{\midstep})),
\end{align*}
where $\forcehbc(t) = \lcurr(t) \forcehbc[\currstep] + \lprev(t) \forcehbc[\prevstep]$, and the elliptic reconstruction operator therefore implies that
\begin{align}\label{eq:Qpde}
	(\quadtimerecon_t(t), v) + \weakform(\linspacetimerecon(t), v) = (\force_h(t), v) + (\forcehbc[\midstep] - \forcehbc(t^{\midstep}), v) \qquad \forall v \in H^1_0(\domain),
\end{align}
with $\forceh(t) = \lcurr(t) \forceh[\currstep] + \lprev(t) \forceh[\prevstep]$.

Taken together, the weak form~\eqref{eq:continuousProblem} of the PDE and relation~\eqref{eq:Qpde} imply that the \emph{Crank-Nicolson parabolic error} $\ctserr(t) := u(t) - \quadspacetimerecon(t)$ satisfies the error equation
\begin{align}
	(\ctserr_t, \testfn) + \weakform(\ctserr, \testfn) 
	&= 
	((\quadtimerecon - \quadspacetimerecon)_{t}, \testfn) + \weakform(\linspacetimerecon - \quadspacetimerecon, \testfn) + (\force - \force_h, \testfn) + (\forcehbc[\midstep] - \forcehbc(t^{\midstep}), \testfn)
	\label{eq:cn:errorequation}
\end{align}
for any $\testfn \in H^1_0(\domain)$.

The terms on the right hand side of~\eqref{eq:cn:errorequation} may be controlled by the error estimator functionals defined in Definition~\ref{def:cn:estimatorterms}, as shown in Lemma~\ref{lem:cn:individualterms}.
Once again, we denote each of the estimators by the first letter of its name in a calligraphic font, this time with the subscript `CN' to indicate that it relates to the Crank-Nicolson method.

\begin{definition}[Terms of the Crank-Nicolson error estimate]\label{def:cn:estimatorterms}
	Let $\currstep \in \{1,\dots,N\}$ and $t \in [t^{\prevstep}, t^{\currstep}]$.
	We define the \emph{space error estimator} and \emph{elliptic reconstruction error estimator} as
	\begin{align*}
		\spaceest{CN}(t) = \ellipest(\quadtimerecon_t(t)),
		\quad\text{ and }\quad
		\ellipreconest{CN}(t) = \ellipest(\quadtimerecon(t)),
	\end{align*}
	respectively, with $\ellipest$ the elliptic error estimator from Lemma~\ref{lem:ellipreconBound}, 
	the \emph{time error estimator} as
	\begin{align*}
		\timeest{CN}(t) = \Cclem \frac{\timestep^2}{8} \Big( \triplenorm{\timederivh (\diffoph[\currstep]\U[\currstep] + \forcehbc[\currstep])}
			+ 
			\norm{h \diffoph[\currstep] \timederivh (\diffoph[\currstep]\U[\currstep] + \forcehbc[\currstep])} \Big),
	\end{align*}
	the \emph{quadratic time reconstruction estimator} as
	\begin{align*}
		\timereconest{CN}(t) = \frac{\timestep^2}{8} \norm{\timederivh (\diffoph[\currstep] \U[\currstep] + \forcehbc[\currstep])},
	\end{align*}
	and the \emph{data approximation error estimators} for \emph{time} and \emph{space} as
	\begin{align*}
		\dataest{T, CN}(t) &= \norm{\force(t) - \lcurr(t) \force[\currstep] - \lprev(t) \force[\prevstep]} + \norm{\forcehbc[\midstep] - \forcehbc(t^{\midstep})},
		\\
		\dataest{S, CN}(t) &= \Cclem \norm{h (\lcurr(t) (\force[\prevstep] - \forceh[\prevstep]) + \lprev(t) (\force[\currstep] - \forceh[\currstep]))},
	\end{align*}
	respectively.
\end{definition}

\begin{lemma}[Estimates for individual Crank-Nicolson error equation terms]\label{lem:cn:individualterms}
	The terms of the Crank-Nicolson error equation~\eqref{eq:cn:errorequation} may be bounded with separate contributions from the spatial and temporal errors
	\begin{align*}
		((\quadtimerecon - \quadspacetimerecon)_{t}(t), v) \leq \spaceest{CN}(t) \norm{v},
		\quad \text{ and } \quad
		\weakform(\linspacetimerecon(t) - \quadspacetimerecon(t), \ctserr) \leq \timeest{CN}(t) \triplenorm{v},
	\end{align*}
	respectively,
	and the data approximation error
	\begin{align*}
		(\force(t) - \forceh(t), \ctserr) + (\forcehbc[\midstep] - \forcehbc(t^{\midstep}), \testfn) &\leq \dataest{T, CN}(t) \norm{v} + \dataest{S, CN}(t) \triplenorm{v}.
	\end{align*}
\end{lemma}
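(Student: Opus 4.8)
The plan is to bound each of the three terms on the right-hand side of the Crank-Nicolson error equation~\eqref{eq:cn:errorequation} separately, in each case rewriting the expression so that the relevant reconstruction differences collapse to the single computable quantity $\timederivh(\diffoph[\currstep]\U[\currstep] + \forcehbc[\currstep])$, and then applying the Cauchy--Schwarz inequality together with Lemma~\ref{lem:ellipreconBound}, the norm equivalence~\eqref{eq:normEquivalence}, and Lemma~\ref{lem:clementEstimates}. The three terms correspond precisely to the three estimator functionals $\spaceest{CN}$, $\timeest{CN}$, and the pair $\dataest{T,CN}, \dataest{S,CN}$ of Definition~\ref{def:cn:estimatorterms}.

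\emph{The spatial term.} For the first term, observe that $\quadtimerecon - \quadspacetimerecon$ is, by Definition~\ref{def:timeReconstructions}, a linear combination (with time-dependent scalar coefficients built from the hat functions) of differences of the form $\wh - \eliprecon[\cdot]\wh$ for various discrete functions $\wh$, plus the extra quadratic correction term which is itself of the form $\phi - \eliprecon[]\phi$ with $\phi = \timederivh(\diffoph[\currstep]\U[\currstep] + \forcehbc[\currstep])$. Differentiating in time and using that $\partial_t(\quadtimerecon - \quadspacetimerecon)$ is again of this type — indeed it equals $\partial_t(\lintimerecon - \linspacetimerecon)$ modulated by the quadratic bubble — one identifies $\partial_t(\quadtimerecon - \quadspacetimerecon)(t)$ with $\partial_t$ of the appropriate time reconstruction minus its elliptic reconstruction, which on $[t^{\prevstep},t^{\currstep}]$ is exactly $\quadtimerecon_t(t) - \eliprecon[]\quadtimerecon_t(t)$ after accounting for the continuity of the reconstructions at the nodes; applying Cauchy--Schwarz and then Lemma~\ref{lem:ellipreconBound} to $\quadtimerecon_t(t)$ yields the bound $\ellipest(\quadtimerecon_t(t))\norm{v} = \spaceest{CN}(t)\norm{v}$. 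The bookkeeping here is the first place care is needed, since one must check that the forcing-data corrections $\forceh - \forcehbc$ appearing in $\eliprecon[\currstep]$ versus $\eliprecon[]$ match up so that no spurious boundary terms survive.

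\emph{The temporal term.} For the second term, $\weakform(\linspacetimerecon(t) - \quadspacetimerecon(t), v)$, note that by Definition~\ref{def:timeReconstructions} the difference $\linspacetimerecon(t) - \quadspacetimerecon(t)$ is exactly $\frac{\timestep^2}{2}\lcurr(t)\lprev(t)\,\eliprecon[]\timederivh(\diffoph[\currstep]\U[\currstep] + \forcehbc[\currstep])$, so using $\weakform(\eliprecon[]\psi_h, v) = (-\diffoph\psi_h, v)$ with $\psi_h = \timederivh(\diffoph[\currstep]\U[\currstep] + \forcehbc[\currstep])$ and the elementary bound $\lcurr(t)\lprev(t) \le 1/4$ (so $\frac{\timestep^2}{2}\cdot\frac14 = \frac{\timestep^2}{8}$) one is left to estimate $(-\diffoph\psi_h, v)$. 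Splitting $-\diffoph\psi_h$ via the definition of the discrete operator into its distributional part $-\diffop\psi_h$ plus a remainder controlled elementwise and across the skeleton — or, more directly, integrating by parts elementwise and bounding using Cauchy--Schwarz with $\triplenorm{v}$ on the gradient part and an inverse/Clément estimate on the jump part — produces the two contributions $\triplenorm{\psi_h}$ and $\norm{h\,\diffoph[\currstep]\psi_h}$ with the constant $\Cclem$, giving the stated bound $\timeest{CN}(t)\triplenorm{v}$.

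\emph{The data term.} The final two terms are handled as in the backward Euler case (Lemma~\ref{lem:be:individualterms}): add and subtract the nodal/linear interpolant $\lcurr(t)\force[\currstep] + \lprev(t)\force[\prevstep]$ of the forcing, use that $\forceh(t)$ is the corresponding $L^2$-projected combination together with the $L^2(\domain)$-orthogonality of $\ltwoproj$ to insert the Clément interpolant $v_h \in \fespacebc$ of $v$, and apply Cauchy--Schwarz with Lemma~\ref{lem:clementEstimates} for the projection-difference piece; the term $(\forcehbc[\midstep] - \forcehbc(t^{\midstep}), v)$ is simply bounded by Cauchy--Schwarz and absorbed into $\dataest{T,CN}$. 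The main obstacle throughout is the careful algebraic rearrangement in the first two steps — tracking exactly which elliptic reconstruction ($\eliprecon[]$ or $\eliprecon[\currstep]$) acts on which discrete function so that the continuity-at-nodes identities collapse the telescoping differences cleanly, and confirming that the forcing corrections built into $\eliprecon[\currstep]$ and into $\quadtimerecon$ are precisely what is needed for the boundary data not to pollute the estimate; the inequalities themselves are then routine.
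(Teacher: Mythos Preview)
Your handling of the spatial and data approximation terms is correct and matches the paper, which simply remarks that these are ``broadly similar to'' the backward Euler case in Lemma~\ref{lem:be:individualterms} and omits the details.

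For the temporal term you correctly identify $\linspacetimerecon - \quadspacetimerecon = \frac{\timestep^2}{2}\lcurr\lprev\,\eliprecon[]\psi_h$ with $\psi_h = \timederivh(\diffoph\U[\currstep] + \forcehbc[\currstep])$, the identity $\weakform(\eliprecon[]\psi_h,v)=(-\diffoph\psi_h,v)$, and the bound $\lcurr\lprev\le\frac14$. The gap is in what follows. Your proposed mechanism --- ``splitting $-\diffoph\psi_h$ into its distributional part $-\diffop\psi_h$ plus a remainder'' or ``integrating by parts elementwise'' --- does not produce the two quantities $\triplenorm{\psi_h}$ and $\norm{h\,\diffoph\psi_h}$ that appear in $\timeest{CN}$. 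The object $\diffoph\psi_h\in\fespacebc$ is already an $L^2$ function, not a differential expression, so there is nothing to integrate by parts in $(-\diffoph\psi_h,v)$; and comparing it with $\diffop\psi_h$ would lead to the standard residual-type interior and jump terms, not to $\norm{h\,\diffoph\psi_h}$.

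The paper's device is to split the \emph{test function} rather than the operator: with $v_h\in\fespacebc$ the Cl\'ement interpolant of $v$,
\[
\weakform(\eliprecon[]\psi_h,v)=\weakform(\eliprecon[]\psi_h,v-v_h)+\weakform(\eliprecon[]\psi_h,v_h)
= -(\diffoph\psi_h,\,v-v_h)+\weakform(\psi_h,v_h),
\]
where the second equality uses the definition of $\eliprecon[]$ on the first summand and, crucially, the definition of $\diffoph$ (valid because $v_h$ is discrete) to rewrite the second as $\weakform(\psi_h,v_h)$. The first piece is then bounded by $\norm{h\,\diffoph\psi_h}\,\norm{h^{-1}(v-v_h)}\le\Cclem\norm{h\,\diffoph\psi_h}\triplenorm{v}$ via Lemma~\ref{lem:clementEstimates}, and the second by continuity of $\weakform$ together with the $H^1$-stability of the Cl\'ement interpolant, giving $\triplenorm{\psi_h}\triplenorm{v_h}\lesssim\Cclem\triplenorm{\psi_h}\triplenorm{v}$. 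This is precisely how the two contributions in $\timeest{CN}$ arise; the Cl\'ement interpolant enters through the splitting of $v$, not through any jump or inverse estimate on $\psi_h$.
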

\begin{proof}
	The proof for the space and data approximation errors are broadly similar to the proofs given in Lemma~\ref{lem:be:individualterms}, so are omitted.
	For the bound on the time error, we first observe that
	\begin{align*}
		\weakform(\linspacetimerecon - \quadspacetimerecon, v) = \frac{\timestep^2}{2} \lcurr(t) \lprev(t) \weakform(\eliprecon[] \timederivh( \diffoph \U[\currstep] + \forcehbc[\currstep]), v).
	\end{align*}
	Letting $v_h \in \fespacebc$ denote the Cl\'ement interpolant of $v$, we obtain
	\begin{align*}
		\weakform(\eliprecon[] \timederivh( \diffoph \U[\currstep] + \forcehbc[\currstep]), v) 
		&= 
		\weakform(\eliprecon[] \timederivh( \diffoph \U[\currstep] + \forcehbc[\currstep]), v - v_h) + \weakform(\eliprecon[] \timederivh( \diffoph \U[\currstep] + \forcehbc[\currstep]), v_h)
		\\
		&=
		- (\diffoph \timederivh( \diffoph \U[\currstep] + \forcehbc[\currstep]), v - v_h) + \weakform(\timederivh( \diffoph \U[\currstep] + \forcehbc[\currstep]), v_h).
	\end{align*}
	The result then follows from continuity of $\weakform$, the approximation and stability properties of the Cl\'ement interpolant laid out in Lemma~\ref{lem:clementEstimates},
	and the fact that $\lcurr(t)\lprev(t) \leq \frac{1}{4}$.
\end{proof}

\subsection{Time accumulations}\label{sec:timeAccumulations}
We now introduce some machinery which will be required to derive a wide variety of new error estimates.
We first introduce the \emph{accumulation control coefficients}, increasing bounded functions of time which control the rate at which error accumulates and have a significant effect on the estimate at early times, as demonstrated in Section~\ref{sec:numerics}.

\begin{definition}[Accumulation control coefficients]
	\label{def:timeAccumulations}
	Let $p \in [1,\infty]$, $\lambda \in [0,1]$, and let $r \in [0,T]$.
	We introduce the 
	\emph{accumulation control coefficients} 
	\begin{align*}
		c_{p,r} := \norm{\beta_r}_{L^q(0,r)}
		=
		\begin{cases}
			 \Big( \dfrac{1 - e^{-q \alpha_\lambda r}}{q \alpha_\lambda} \Big)^{1/q} &\text{ for } p \in (1,\infty],
			 \\
			 \qquad 1 &\text{ for } p = 1,
		\end{cases}
	\end{align*}
	where $q$ satisfies $\frac{1}{p} + \frac{1}{q} = 1$, $\beta_r(s) = e^{\alpha_\lambda(s-r)}$, and $\alpha_\lambda = \frac{2(1 - \lambda) }{(\Cequiv\Cpf)^2}$.
\end{definition}

The rationale for introducing these accumulation control coefficients is demonstrated by Lemma~\ref{lem:timeAccumulations}.
Here, one should think of the term $F$ as representing some estimator term accumulating with the simulation time, and $\errorSurrogate$ as the error which we are trying to bound (either in the $L^{\infty}(L^2)$ or a weighted $L^2(H^1)$ norm).
The terms being bounded in each case are typical terms which we shall encounter in the subsequent error analysis, cf. Lemma~\ref{lem:reconstructionError}.

\begin{lemma}[Exponentially weighted time accumulations]
	\label{lem:timeAccumulations}
	Let $r \geq 0$ and suppose that $F \in L^{p^{\star}}(0,r)$ for some $p^{\star} \in [1,\infty]$, with $F(t) \geq 0$ for a.e. $t \in [0,r]$.
	Then, for $\errorSurrogate \in L^{\infty}(0,r; L^2(\domain))$, the estimate
	\begin{align}\label{eq:lplinfacc}
		\int_{0}^{r} 
		&e^{\alpha_\lambda (s - r)} 
		F(s)
		\norm{\errorSurrogate(s)}
		\differential s
		\leq
		\min_{p \in [1,\infty]} c_{p,r} \norm{F}_{L^p(0,r)}
		\max_{s \in [0, r]} \norm{\errorSurrogate(s)},
	\end{align}
	holds, and for $\errorSurrogate \in L^{2}(0,r; H^1(\domain))$, the estimate
	\begin{align}\label{eq:lpl2acc}
		\int_{0}^{r} 
		&e^{\alpha_\lambda (s - r)} 
		F(s)
		\triplenorm{\errorSurrogate(s)}
		\differential s
		\leq
		\min_{p \in [2,\infty]} (c_{\frac{p}{2},r})^{1/2} \norm{F}_{L^{p}(0,r)}
		\Big( \int_0^r e^{\alpha_\lambda (s - r)} \triplenorm{\errorSurrogate(s)}^2 \differential s \Big)^{1/2},
	\end{align}
	holds.
	The accumulation control coefficients $c_{p,r}$ are defined in Definition~\ref{def:timeAccumulations}.
\end{lemma}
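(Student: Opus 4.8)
The plan is to prove both inequalities in essentially the same way, using a single application of Hölder's inequality on the time interval $[0,r]$ after isolating the exponential weight. For \eqref{eq:lplinfacc}, I would first bound $\norm{\errorSurrogate(s)} \leq \max_{s \in [0,r]} \norm{\errorSurrogate(s)}$ and pull this factor out of the integral, leaving $\int_0^r e^{\alpha_\lambda(s-r)} F(s) \,\differential s$. Then I would split the integrand as $\big(e^{\alpha_\lambda(s-r)}\big)\cdot\big(F(s)\big)$ and apply Hölder's inequality with conjugate exponents $q$ and $p$ (so $\frac1p + \frac1q = 1$), giving $\int_0^r e^{\alpha_\lambda(s-r)} F(s)\,\differential s \leq \norm{e^{\alpha_\lambda(\cdot - r)}}_{L^q(0,r)} \norm{F}_{L^p(0,r)}$. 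The first factor is exactly $c_{p,r} = \norm{\beta_r}_{L^q(0,r)}$ by definition, since $\beta_r(s) = e^{\alpha_\lambda(s-r)}$. Since the resulting bound holds for every $p \in [1,\infty]$ (with $p = 1$, $q = \infty$ handled by the explicit value $c_{1,r} = 1$, noting $\sup_{s \in [0,r]} e^{\alpha_\lambda(s-r)} = 1$), we may take the minimum over $p$, yielding \eqref{eq:lplinfacc}. I should only remark that the hypothesis $F \in L^{p^\star}(0,r)$ for some $p^\star$ ensures at least one choice of $p$ gives a finite right-hand side, so the minimum is meaningful.

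For \eqref{eq:lpl2acc}, the structure is analogous but now the target norm on $\errorSurrogate$ is the exponentially weighted $L^2(0,r;H^1)$-type quantity, so the exponential weight has to be shared between the $F$-factor and the $\errorSurrogate$-factor. I would write $e^{\alpha_\lambda(s-r)} F(s) \triplenorm{\errorSurrogate(s)} = \big(e^{\alpha_\lambda(s-r)/2} F(s)\big)\cdot\big(e^{\alpha_\lambda(s-r)/2} \triplenorm{\errorSurrogate(s)}\big)$ and apply the Cauchy–Schwarz inequality in time to obtain $\big(\int_0^r e^{\alpha_\lambda(s-r)} F(s)^2\,\differential s\big)^{1/2}\big(\int_0^r e^{\alpha_\lambda(s-r)} \triplenorm{\errorSurrogate(s)}^2\,\differential s\big)^{1/2}$. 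The second factor is already the right-hand side of \eqref{eq:lpl2acc}. For the first factor, I would apply Hölder's inequality once more, this time to $\int_0^r e^{\alpha_\lambda(s-r)} F(s)^2 \,\differential s$ with conjugate exponents $\tilde q$ and $\tilde p$: this gives $\norm{e^{\alpha_\lambda(\cdot-r)}}_{L^{\tilde q}(0,r)} \norm{F^2}_{L^{\tilde p}(0,r)} = \norm{e^{\alpha_\lambda(\cdot-r)}}_{L^{\tilde q}(0,r)} \norm{F}_{L^{2\tilde p}(0,r)}^2$. Setting $p = 2\tilde p$, so that $\tilde p = p/2$ and $\tilde q$ is the conjugate of $p/2$, the first factor becomes $(c_{p/2, r})^{1/2} \norm{F}_{L^p(0,r)}$ after taking the square root, and ranging $p$ over $[2,\infty]$ (so $p/2 \in [1,\infty]$) and minimising gives \eqref{eq:lpl2acc}.

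I do not anticipate a serious obstacle here; the argument is two nested applications of Hölder's inequality combined with the explicit evaluation of $\norm{\beta_r}_{L^q(0,r)}$. The one point requiring a little care is the bookkeeping of exponents — making sure that the conjugacy relations line up so that the power of the exponential weight appearing is exactly the one recorded in Definition~\ref{def:timeAccumulations}, and that the limiting cases $p = 1$ (respectively $p = 2$) and $p = \infty$ are covered by the stated piecewise formula for $c_{p,r}$. In particular, for $p = 1$ in \eqref{eq:lplinfacc} one uses the trivial bound $e^{\alpha_\lambda(s-r)} \leq 1$ on $[0,r]$ together with $\norm{F}_{L^1(0,r)}$, which is consistent with $c_{1,r} = 1$; and for $p/2 = 1$ in \eqref{eq:lpl2acc} one similarly has $(c_{1,r})^{1/2} = 1$.
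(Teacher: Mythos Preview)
Your proposal is correct and follows essentially the same approach as the paper: for \eqref{eq:lplinfacc}, pull out $\max_s\norm{\errorSurrogate(s)}$ and apply H\"older with exponents $(p,q)$ to identify $c_{p,r}=\norm{\beta_r}_{L^q(0,r)}$; for \eqref{eq:lpl2acc}, first apply Cauchy--Schwarz (your splitting of the exponential weight is exactly this) and then H\"older on the $F^2$-integral, followed by the substitution $p=2\tilde p$. Your treatment of the endpoint cases is slightly more explicit than the paper's, but the argument is the same.
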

\begin{proof}
	We begin by proving~\eqref{eq:lplinfacc}.
	Taking the maximum of $\norm{\errorSurrogate(t)}$ and applying H\"older's inequality for some $p \in [1, p^{\star}]$, we find
	\begin{align*}
		\int_{0}^{r} 
		&e^{\alpha_\lambda (s - r)} 
		F(s)
		\norm{\errorSurrogate(s)}
		\differential s
		\leq
		\max_{s \in [0, t^m]} \norm{\errorSurrogate(s)}
		\int_{0}^{r} 
		e^{\alpha_\lambda (s - r)} 
		F(s)
		\differential s
		\leq
		\norm{\beta_r}_{L^q(0,r)}
		\norm{F}_{L^p(0,r)}
		\max_{s \in [0, t^m]} \norm{\errorSurrogate(s)}.
	\end{align*}
	where $\beta_r(t) = e^{\alpha_\lambda (t - r)}$ and $q$ satisfies $\frac{1}{p} + \frac{1}{q} = 1$. 
	The result follows from the fact that $p$ is arbitrary, and the observation that $c_{p,r} = \norm{\beta_r}_{L^q(0,r)}$.
	
	To prove~\eqref{eq:lpl2acc}, we argue similarly. 
	Applying the Cauchy-Schwarz inequality, we find
	\begin{align*}
		\int_{0}^{r} 
		&e^{\alpha_\lambda (s - r)} 
		F(s)
		\triplenorm{\errorSurrogate(s)}
		\differential s
		\leq
		\Big( \int_{0}^{r} 
		e^{\alpha_\lambda (s - r)} 
		\triplenorm{\errorSurrogate(s)}^2
		\differential s \Big)^{1/2}
		\Big( \int_{0}^{r} 
		e^{\alpha_\lambda (s - r)} 
		(F(s))^2
		\differential s \Big)^{1/2}.
	\end{align*}
	H\"older's inequality, applied to the second term for some $p \in [1, p^{\star}]$, then implies that
	\begin{align*}
		\Big( \int_{0}^{r} 
		e^{\alpha_\lambda (s - r)} 
		(F(s))^2
		\differential s \Big)^{1/2}
		\leq
		\norm{\beta_r}_{L^q(0,r)}^{1/2}
		\norm{F}_{L^{2p}(0,r)},
	\end{align*}
	where $q$ once again satisfies $\frac{1}{p} + \frac{1}{q} = 1$.
	The result then follows from the definition of $c_{p,r}$ and the fact that $p$ is arbitrary.
\end{proof}

A few comments on this result are in order at this point.
Firstly, we note that if $F$ is constant on each interval, say $F(t) = F^{\currstep} \geq 0$ on $t \in (t^{\prevstep}, t^{\currstep})$, then the accumulation becomes
\begin{align*}
	\norm{F}_{L^p(0,t^m)} =
	\begin{cases}
	\left( \sum_{\currstep = 1}^{m} \timestep \big( F^{\currstep} \big)^{p} \right)^{1/p} &\text{ for } p \in [1,\infty)
	\\
	\max_{n \in \{1,\dots,m\} } F^{\currstep} &\text{ for } p = \infty,
	\end{cases}
\end{align*}
which, for $p=1,2$, yields the time accumulations familiar from conventional $L^{\infty}(0,T; L^2(\domain))$ error estimates such as those in~\cite{Lakkis:2006jk,Lozinski:2009bt,Bansch:2012gu,Bansch:2013cu,Karakatsani:2012ci,Georgoulis:2017wq}, for instance.

We next observe that while both bounds~\eqref{eq:lplinfacc} and~\eqref{eq:lpl2acc} allow for $L^{\infty}$-type time accumulations of $F$, only the former supports $L^1$-type accumulations.
As we shall demonstrate in Section~\ref{sec:numerics}, $L^1$-type accumulations are generally preferable for short time estimates.
In most situations, one has a choice which of the terms bounded in Lemma~\ref{lem:timeAccumulations} will appear in the error equation, the latter often coming with an extra power of $h$ (for instance, observe the bound of the data approximation term in Lemma~\ref{lem:reconstructionError}).
In this light, the choice may be seen as an alternative perspective on the conventional tradeoff between powers of $h$ or $\timestep$.
Since the aim of this work is to present \emph{long} time estimates, we shall not trouble ourselves too much on this point, although it is worth bearing in mind.

Finally, we remark that it is possible to subdivide the domain $[0,r]$ used in Lemma~\ref{lem:timeAccumulations}, and apply different types of accumulations on each sub division.
For instance, if $r_0 = 0$, $ r_M = r$, and $r_{i-1} < r_i$ for $i = 1,\dots,M$, we could obtain a bound of the form
\begin{align*}
		\int_{0}^{r} 
		&e^{\alpha_\lambda (s - r)} 
		F(s)
		\norm{\errorSurrogate(s)}
		\differential s
		\leq
		\max_{s \in [0, r]} \norm{\errorSurrogate(s)}
		\sum_{i=1}^{M}
		\Big(
		\min_{p_i \in [1,\infty]} c_{p_i,[r_{i-1},r_i]} \norm{F}_{L^{p_i}(r_{i-1},r_i)}
		\Big),
\end{align*}
where, here only, we use the notation $c_{p,[a, b]} = \norm{\beta_r}_{L^q(a,b)}$ for $a,b \in [0,r]$, with $\beta_r$ and $q$ defined as in Definition~\ref{def:timeAccumulations}.
It is clear that this bound is tighter than the bound~\eqref{eq:lplinfacc}, and could allow very general error estimates.
In the interests of clarity, however, this is not something we pursue any further here.

\subsection{Estimates for the parabolic error}\label{sec:parabolicError}

We now draw together the tools of Lemmas~\ref{lem:be:individualterms}, \ref{lem:cn:individualterms} and~\ref{lem:timeAccumulations} to provide an estimate for the parabolic error in each method.
While the previous lemmas provided spatial estimates for individual terms, in some sense, this lemma provides a temporal estimate and is the key point of departure in the proof from conventional $L^{\infty}(L^2)$ error estimates as it is here that the role of the new varieties of time accumulation first appear.
The first step of the argument, in which a Poincar\'e-Friedrichs inequality is applied, is a well known technique for deriving a priori error estimates for this class of problem, cf.~\cite{Thomee:2006th}, and appears to have been first applied in an a posteriori setting in a slightly different way to ultimately derive the final time $L^2(\domain)$ error estimate of~\citet[\S 6]{Lakkis:2007du}.

\begin{lemma}[$L^{\infty}(0,T; L^2(\domain))$ parabolic error estimates]\label{lem:reconstructionError}
    For any $r \in [0,T]$, the backward Euler parabolic error $\xi$ and Crank-Nicolson parabolic error $\ctserr$ satisfy
	\begin{align*}
		\max_{s \in [0,r]} \norm{\xi(s)}
		&\leq 
			\norm{\xi(0)} + 
			\sqrt{2} 
			\Big(
			\min_{p \in [1,\infty]} 
			c_{p,r}
			\norm{\spaceest{BE}}_{L^{p}(0,r)}
			+ 
			\min_{p \in [1,\infty]} 
			c_{p,r}
			\norm{\timeest{BE}}_{L^p(0,r)}
		\\&\qquad\qquad\qquad\qquad\qquad
			+ 
			\min_{p \in [1,\infty]} 
			c_{p,r}
			\norm{\dataest{T, BE}}_{L^p(0,r)}
			+ 
			\min_{p \in [2,\infty]} 
			(c_{\frac{p}{2},r})^{1/2}
			\norm{\dataest{S, BE}}_{L^p(0,r)}
			\Big),
	\end{align*}
	and
	\begin{align*}
		\max_{s \in [0,r]} \norm{\ctserr(s)}
		&\leq 
			\norm{\ctserr(0)} + 
			\sqrt{2} 
			\Big(
			\min_{p \in [1,\infty]} 
			c_{p,r}
			\norm{\spaceest{CN}}_{L^{p}(0,r)}
			+ 
			\min_{p \in [2,\infty]} 
			(c_{\frac{p}{2},r})^{1/2}
			\norm{\timeest{CN}}_{L^p(0,r)}
		\\&\qquad\qquad\qquad\qquad\qquad
			+ 
			\min_{p \in [1,\infty]} 
			c_{p,r}
			\norm{\dataest{T, CN}}_{L^p(0,r)}
			+ 
			\min_{p \in [2,\infty]} 
			(c_{\frac{p}{2},r})^{1/2}
			\norm{\dataest{S, CN}}_{L^p(0,r)}
			\Big),
	\end{align*}
	respectively.
\end{lemma}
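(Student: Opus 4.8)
The plan is to derive a single differential inequality for $\tfrac{1}{2}\tfrac{d}{dt}\norm{\cdot}^2$ satisfied by the parabolic error in each scheme, then use a Poincar\'e--Friedrichs inequality to absorb the $\triplenorm{\cdot}$-term on the left, producing an exponentially-weighted integral inequality to which Lemma~\ref{lem:timeAccumulations} can be applied. First I would test the backward Euler error equation~\eqref{eq:be:errorequation} with $\testfn = \xi(t)$, giving
\begin{align*}
	\tfrac{1}{2}\tfrac{d}{dt}\norm{\xi}^2 + \triplenorm{\xi}^2 = (\text{RHS terms acting on } \xi).
\end{align*}
By Lemma~\ref{lem:be:individualterms}, the right-hand side is bounded by $\big(\spaceest{BE}(t) + \timeest{BE}(t) + \dataest{T,BE}(t)\big)\norm{\xi(t)} + \dataest{S,BE}(t)\triplenorm{\xi(t)}$. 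For the last term I would use Young's inequality in the form $\dataest{S,BE}\triplenorm{\xi} \le \lambda\triplenorm{\xi}^2 + \tfrac{1}{4\lambda}(\dataest{S,BE})^2$ — but actually, to keep $\lambda$ as the free parameter from Definition~\ref{def:timeAccumulations}, I would split $\triplenorm{\xi}^2 = \lambda\triplenorm{\xi}^2 + (1-\lambda)\triplenorm{\xi}^2$ on the left, absorb the $\dataest{S,BE}$ term into the $\lambda\triplenorm{\xi}^2$ part via Young, and then use the Poincar\'e--Friedrichs inequality $\norm{\xi} \le \Cpf\norm{\xi}_1 \le \Cpf\Cequiv\triplenorm{\xi}$ (via~\eqref{eq:normEquivalence}) on the remaining $(1-\lambda)\triplenorm{\xi}^2$ to get $(1-\lambda)\triplenorm{\xi}^2 \ge \tfrac{(1-\lambda)}{(\Cequiv\Cpf)^2}\norm{\xi}^2 = \tfrac{\alpha_\lambda}{2}\norm{\xi}^2$. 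This yields
\begin{align*}
	\tfrac{1}{2}\tfrac{d}{dt}\norm{\xi}^2 + \tfrac{\alpha_\lambda}{2}\norm{\xi}^2 \le \big(\spaceest{BE} + \timeest{BE} + \dataest{T,BE}\big)\norm{\xi} + \tfrac{1}{4\lambda}(\dataest{S,BE})^2.
\end{align*}

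Next I would multiply through by the integrating factor $e^{\alpha_\lambda t}$ and integrate from $0$ to an arbitrary $s \le r$, obtaining $e^{\alpha_\lambda s}\norm{\xi(s)}^2 \le \norm{\xi(0)}^2 + 2\int_0^s e^{\alpha_\lambda\sigma}\big[(\spaceest{BE}+\timeest{BE}+\dataest{T,BE})(\sigma)\norm{\xi(\sigma)} + \tfrac{1}{4\lambda}(\dataest{S,BE}(\sigma))^2\big]\,d\sigma$. Multiplying by $e^{-\alpha_\lambda r}$ (using $s \le r$ so that $e^{\alpha_\lambda(s-r)}\le 1$ where needed, and $e^{-\alpha_\lambda r}\le e^{-\alpha_\lambda\sigma}\cdot e^{\alpha_\lambda(\sigma-r)}$... more precisely keeping the weight $e^{\alpha_\lambda(\sigma-r)}$ under the integral and $e^{\alpha_\lambda(s-r)}$ on the left), taking the maximum over $s\in[0,r]$, and denoting $M := \max_{s\in[0,r]}\norm{\xi(s)}$, I would arrive at an inequality of the shape
\begin{align*}
	M^2 \le \norm{\xi(0)}^2 + 2M\int_0^r e^{\alpha_\lambda(\sigma-r)}(\spaceest{BE}+\timeest{BE}+\dataest{T,BE})(\sigma)\,d\sigma + \tfrac{1}{2\lambda}\int_0^r e^{\alpha_\lambda(\sigma-r)}(\dataest{S,BE}(\sigma))^2\,d\sigma.
\end{align*}
The first integral is bounded by Lemma~\ref{lem:timeAccumulations}~\eqref{eq:lplinfacc} applied separately to each of $\spaceest{BE}$, $\timeest{BE}$, $\dataest{T,BE}$ (each producing $\min_p c_{p,r}\norm{\cdot}_{L^p(0,r)}$ times $M$), and the second integral, after recognising it as $\int_0^r e^{\alpha_\lambda(\sigma-r)}(\dataest{S,BE})^2 \le (\min_{p\ge2}c_{p/2,r}\norm{\dataest{S,BE}}_{L^p(0,r)})^2$ via the same Hölder argument underlying~\eqref{eq:lpl2acc}, contributes the last term. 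This leaves a quadratic inequality $M^2 \le \norm{\xi(0)}^2 + 2MB + \tfrac{1}{2\lambda}C^2$ where $B$ collects the three $\min_p c_{p,r}\norm{\cdot}_{L^p}$ terms and $C = \min_{p\ge2}(c_{p/2,r})^{1/2}\norm{\dataest{S,BE}}_{L^p}$; solving gives $M \le \norm{\xi(0)} + B + B + \dots$ — here I would use $M \le B + \sqrt{B^2 + \norm{\xi(0)}^2 + C^2/(2\lambda)} \le \norm{\xi(0)} + 2B + C/\sqrt{2\lambda}$, and then fold the $\tfrac{1}{\sqrt{2\lambda}}$ and the factor $2$ into the stated constant $\sqrt{2}$ by an appropriate (final) choice balancing the Young parameter — I expect the cleanest route is to carry the Young split with weight $\tfrac{\lambda}{2}\triplenorm{\xi}^2$ absorbing $\dataest{S,BE}$ so the residual constant works out to exactly $\sqrt{2}$; this bookkeeping is the one delicate point.

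For Crank--Nicolson the argument is identical in structure: test~\eqref{eq:cn:errorequation} with $\testfn = \ctserr(t)$, bound the right-hand side using Lemma~\ref{lem:cn:individualterms}, noting the crucial difference that both $\timeest{CN}$ and $\dataest{S,CN}$ come paired with $\triplenorm{\ctserr}$ rather than $\norm{\ctserr}$ — so both get absorbed via Young into the $\lambda$-portion of $\triplenorm{\ctserr}^2$ and hence both emerge through the $L^2$-type accumulation~\eqref{eq:lpl2acc} with coefficients $\min_{p\ge2}(c_{p/2,r})^{1/2}\norm{\cdot}_{L^p}$, exactly as in the stated bound, while $\spaceest{CN}$ and $\dataest{T,CN}$ pair with $\norm{\ctserr}$ and go through~\eqref{eq:lplinfacc}. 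The main obstacle, as flagged above, is purely the constant-tracking: making sure the Poincar\'e--Friedrichs constant $\alpha_\lambda = 2(1-\lambda)/(\Cequiv\Cpf)^2$, the integrating factor $e^{\alpha_\lambda t}$, the Young parameters for the two $\triplenorm{\cdot}$-paired terms, and the final resolution of the quadratic inequality all conspire to leave precisely the clean prefactor $\sqrt{2}$ and the additive $\norm{\xi(0)}$ (resp. $\norm{\ctserr(0)}$); everything else is a mechanical combination of the already-established Lemmas~\ref{lem:be:individualterms}, \ref{lem:cn:individualterms}, and~\ref{lem:timeAccumulations}.
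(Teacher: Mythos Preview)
Your proposal is correct and follows essentially the same route as the paper. The only notable difference is the order in which Young's inequality is applied to the $\triplenorm{\cdot}$-paired terms: you apply Young pointwise \emph{before} integrating (absorbing into $\lambda\triplenorm{\xi}^2$ and leaving $\tfrac{1}{4\lambda}\int_0^r e^{\alpha(\sigma-r)}F^2\,d\sigma$, to which H\"older is then applied), whereas the paper keeps $\lambda\int_0^r e^{\alpha(s-r)}\triplenorm{\ctserr}^2\,ds$ on the left through the integration step, invokes the Cauchy--Schwarz--then--H\"older form~\eqref{eq:lpl2acc} of Lemma~\ref{lem:timeAccumulations} directly, and only then applies Young (with $\epsilon = 2\lambda$) to absorb the weighted $L^2(H^1)$ factor; the two routes yield identical bounds. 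For the constant you flagged, the paper does not solve the quadratic explicitly but instead applies a second Young inequality to the $M\cdot B$ term, takes square roots, and then fixes $\lambda = \tfrac14$, which is how the stated prefactor $\sqrt{2}$ emerges.
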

\begin{proof}
    We prove the Crank-Nicolson estimate, and the backward Euler estimate follows similarly.

	To begin, we select $v = \ctserr$ in~\eqref{eq:cn:errorequation}, apply the estimates of Lemma~\ref{lem:cn:individualterms} to the terms on the right hand side individually.
	We therefore find that for any $\currstep \in \{ 1,\dots,N \}$ and $t \in [t^{\prevstep}, t^{\currstep}]$, 
	\begin{align*}
		\frac{1}{2}\frac{d}{dt} \norm{\ctserr(t)}^2 + \triplenorm{\ctserr(t)}^2 
		&\leq 
			\Big(
				\spaceest{CN}(t) 
				+
				\dataest{T, CN}(t) 
			\Big) \norm{\ctserr(t)}
			+
			\Big(
				\timeest{CN}(t) 
				+
				\dataest{S, CN}(t) 
			\Big) \triplenorm{\ctserr(t)}.
	\end{align*}
	The Poincar\'e-Friedrichs inequality and the norm equivalence~\eqref{eq:normEquivalence} imply that, for any $\lambda \in [0,1]$,
	\begin{align*}
		\frac{1}{2}\frac{d}{dt} \norm{\ctserr(t)}^2 + \alpha \norm{\ctserr(t)}^2 + \lambda \triplenorm{\ctserr(t)}^2 
		\leq 
		\frac{1}{2}\frac{d}{dt} \norm{\ctserr(t)}^2 + \triplenorm{\ctserr(t)}^2,
	\end{align*}
	where $\alpha \equiv \alpha_\lambda = \frac{2(1 - \lambda) }{(\Cequiv\Cpf)^2}$ (as in Definition~\ref{def:timeAccumulations}; we omit the subscript here for brevity), and therefore
	\begin{align*}
		\frac{1}{2}\frac{d}{dt} \Big( e^{\alpha t} \norm{\ctserr(t)}^2 \Big) + \lambda e^{\alpha t} \triplenorm{\ctserr(t)}^2 \leq 
			e^{\alpha t} 
			\Big(
				\spaceest{CN}(t) 
				+
				\dataest{T, CN}(t) 
			\Big) \norm{\ctserr(t)}
			+
			e^{\alpha t} 
			\Big(
				\timeest{CN}(t) 
				+
				\dataest{S, CN}(t) 
			\Big) \triplenorm{\ctserr(t)}.
	\end{align*}
	Integrating over $s \in [0, r]$, 
	we thus obtain
	\begin{align*}
		\frac{1}{2}\norm{\ctserr(r)}^2 + \lambda \int_{0}^{r} e^{\alpha(s-r)} \triplenorm{\ctserr(s)}^2 \differential s
		\leq 
		\frac{1}{2} \norm{\ctserr(0)}^2 
		+
		\int_{0}^{r} 
		&e^{\alpha (s-r)} 
		\Big(
			\spaceest{CN}(s) 
			+
			\dataest{T, CN}(s) 
		\Big) \norm{\ctserr(s)}
		\\&\quad
		+
		e^{\alpha (s-r)} 
		\Big(
			\timeest{CN}(s) 
			+
			\dataest{S, CN}(s) 
		\Big) \triplenorm{\ctserr(s)}
		\differential s.
	\end{align*}
	Invoking Lemma~\ref{lem:timeAccumulations} with $\nu = \ctserr$ on each term of the integral, we thus obtain
	\begin{align*}
		&\frac{1}{2} \norm{\ctserr(r)}^2 
		 + 
		 \lambda
		 \int_{0}^{r}  e^{\alpha(s-r)} \triplenorm{\ctserr(s)}^2 \differential s
		\leq 
		\\&\qquad
		\frac{1}{2} \norm{\ctserr(0)}^2 
		+
		\max_{s \in [0,r]} \norm{\ctserr(s)} 
			\Big(
			\min_{p \in [1,\infty]} 
			c_{p,r}
			\norm{\mathcal{S}_{CN}}_{L^{p}(0,r)}
			+ 
			\min_{p \in [1,\infty]} 
			c_{p,r}
			\norm{ \dataest{T, CN}(s) }_{L^p(0,r)}
			\Big)
		\\&\qquad
			+ 
			\Big(\int_{0}^{r} e^{\alpha(s-r)} \triplenorm{\ctserr(s)}^2 \differential s\Big)^{1/2}
			\Big(
			\min_{p \in [2,\infty]} 
			(c_{\frac{p}{2},r})^{1/2}
			\norm{\timeest{CN}}_{L^p(0,r)}
			+ 
			\min_{p \in [2,\infty]} 
			(c_{\frac{p}{2},r})^{1/2}
			\norm{\dataest{S, CN}}_{L^p(0,r)}
			\Big).
	\end{align*}
	We next apply Young's inequality, which states that if $a,b \geq 0$, then $ab \leq \frac{\epsilon}{2} a^2 + \frac{1}{2\epsilon} b^2$ for any $\epsilon > 0$.
	Picking $\epsilon = 2\lambda$ applying the inequality with $a$ and $b$ as the final two factors on the second line of the bound, we obtain
	\begin{align*}
		&\frac{1}{2} \norm{\ctserr(r)}^2 
		\leq 
		\frac{1}{2} \norm{\ctserr(0)}^2 
		+
		\max_{s \in [0,r]} \norm{\ctserr(s)} 
			\Big(
			\min_{p \in [1,\infty]} 
			c_{p,r}
			\norm{\mathcal{S}_{CN}}_{L^{p}(0,r)}
			+ 
			\min_{p \in [1,\infty]} 
			c_{p,r}
			\norm{ \dataest{T, CN}(s) }_{L^p(0,r)}
			\Big)
		\\&\qquad\qquad\qquad
			+ 
			\frac{1}{4\lambda}
			\Big(
			\min_{p \in [2,\infty]} 
			(c_{\frac{p}{2},r})^{1/2}
			\norm{\timeest{CN}}_{L^p(0,r)}
			+ 
			\min_{p \in [2,\infty]} 
			(c_{\frac{p}{2},r})^{1/2}
			\norm{\dataest{S, CN}}_{L^p(0,r)}
			\Big)^2.
	\end{align*}
	Since the right hand side of this bound is a non-decreasing function of $r$, it also provides a bound on $\max_{s \in [0,r]} \norm{\ctserr(s)}^2$.
	The result then follows by applying Young's inequality once again, taking square roots, and selecting $\lambda = \frac{1}{4}$.
\end{proof}

We note that it is necessary to apply H\"older's inequality (in the guise of Lemma~\ref{lem:timeAccumulations}) in the proof of Lemma~\ref{lem:reconstructionError} before one may derive a bound for $\max_{s\in[0,r]}\norm{\rho(s)}$.
This is because a quantity of the form
\begin{align*}
	G(r) = \int_0^r e^{\alpha_\lambda (s-r)} \abs{g(s)} \differential s
\end{align*}
can be a \emph{decreasing} function of $r$, since the exponential weight in the integral possesses a `forgetfulness' property: for fixed $s$ the function $e^{\alpha_\lambda (s-r)}$ decreases as $r$ increases.
Hence, as $r$ increases, less weight is assigned to the early part of the integral, and $G(r)$ can therefore decrease.
This means that an estimate of the form of $G$ is well suited for a final time estimate (cf.~\citet[\S 6]{Lakkis:2007du}), although not for a maximum norm estimate.
Upon applying H\"older's inequality, however, this problem is alleviated since the exponential weight and the function $g$ are integrated separately.

\subsection{Final error estimates}\label{sec:finalEstimates}
Finally, we are in a position to provide optimal order estimates for the $L^{\infty}(L^2)$ error for the backward Euler and Crank-Nicolson methods.
These are based on using the splittings~\eqref{eq:be:splitting} and~\eqref{eq:cn:splitting} of the error, and applying the estimates of Lemmas~\ref{lem:ellipreconBound} and~\ref{lem:reconstructionError} for each component.

\begin{theorem}[$L^{\infty}(0,T; L^2(\domain))$ error estimates]\label{thm:finalestimate}
Let $r \in [0,T]$.
If $\{\U[\currstep]\}_{\currstep=0}^{N}$ denotes the backward Euler approximations of $u$ at the time nodes, then
\begin{align*}
	\norm{u - U}_{L^{\infty}(0,r; L^2(\domain))}
		&\leq 
			\norm{u_0 - \interp u_0} 
			+ 
			\norm{\ellipreconest{BE}}_{L^{\infty}(0,r)}
		\\&\qquad
			+
			\sqrt{2} 
			\Big(
			\min_{p \in [1,\infty]} 
			c_{p,r}
			\norm{\spaceest{BE}}_{L^{p}(0,r)}
			+ 
			\min_{p \in [1,\infty]} 
			c_{p,r}
			\norm{\timeest{BE}}_{L^p(0,r)}
		\\&\qquad\qquad\qquad\qquad
			+ 
			\min_{p \in [1,\infty]} 
			c_{p,r}
			\norm{\dataest{T, BE}}_{L^p(0,r)}
			+ 
			\min_{p \in [2,\infty]} 
			(c_{\frac{p}{2},r})^{1/2}
			\norm{\dataest{S, BE}}_{L^p(0,r)}
			\Big),
\end{align*}
where the individual terms are defined in Definition~\ref{def:be:estimatorterms}.

If $\{\U[\currstep]\}_{\currstep=0}^{N}$ denotes the Crank-Nicolson approximations of $u$ at the time nodes, then
\begin{align*}
	\norm{u - U}_{L^{\infty}(0,r; L^2(\domain))}
		&\leq 
			\norm{u_0 - \interp u_0} 
			+ 
			\norm{\ellipreconest{CN}}_{L^{\infty}(0,r)}
			+ 
			\norm{\timereconest{CN}}_{L^{\infty}(0,r)}
		\\&\qquad
			+
			\sqrt{2}
			\Big(
			\min_{p \in [1,\infty]} 
			c_{p,r}
			\norm{\spaceest{CN}}_{L^{p}(0,r)}
			+ 
			\min_{p \in [2,\infty]} 
			(c_{\frac{p}{2},r})^{1/2}
			\norm{\timeest{CN}}_{L^p(0,r)}
		\\&\qquad\qquad\qquad\qquad
			+ 
			\min_{p \in [1,\infty]} 
			c_{p,r}
			\norm{\dataest{T, CN}}_{L^p(0,r)}
			+ 
			\min_{p \in [2,\infty]} 
			(c_{\frac{p}{2},r})^{1/2}
			\norm{\dataest{S, CN}}_{L^p(0,r)}
			\Big),
	\end{align*}
where the individual terms are defined in Definition~\ref{def:cn:estimatorterms}.
\end{theorem}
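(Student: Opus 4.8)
The plan is to obtain the estimate by assembling pieces already in place: the triangle-inequality splittings~\eqref{eq:be:splitting} and~\eqref{eq:cn:splitting}, the parabolic error estimates of Lemma~\ref{lem:reconstructionError}, the elliptic reconstruction error estimate of Lemma~\ref{lem:ellipreconBound}, and a direct estimate for the quadratic time reconstruction error. Reading $U$ as the piecewise linear-in-time reconstruction $\lintimerecon$ of the nodal approximations, the left-hand side of each splitting is exactly $\norm{u - U}_{L^{\infty}(0,r;L^2(\domain))}$, so it suffices to bound each term on the right.

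For the backward Euler scheme, the first term of~\eqref{eq:be:splitting} is $\max_{s\in[0,r]}\norm{\xi(s)}$, which Lemma~\ref{lem:reconstructionError} controls by $\norm{\xi(0)}$ together with the $\sqrt{2}\,(\cdots)$ block of $L^{p}(0,r)$-accumulated estimator terms appearing in the statement. Since the scheme is initialised with $\U[0] = \interp u_0$, one may write $\xi(0) = (u_0 - \interp u_0) + (\interp u_0 - \linspacetimerecon(0))$, and because $\linspacetimerecon(0)$ is the elliptic reconstruction of $\interp u_0 \in \fespacebc[0]$, Lemma~\ref{lem:ellipreconBound} bounds the second summand by $\ellipreconest{BE}(0) \le \norm{\ellipreconest{BE}}_{L^{\infty}(0,r)}$, which produces the $\norm{u_0 - \interp u_0}$ term. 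For the second term of~\eqref{eq:be:splitting}, one uses that for $t \in [t^{\prevstep},t^{\currstep}]$
\[
\linspacetimerecon(t) - \lintimerecon(t) = \lcurr(t)\big(\eliprecon[\currstep]\U[\currstep] - \U[\currstep]\big) + \lprev(t)\big(\eliprecon[\prevstep]\U[\prevstep] - \U[\prevstep]\big),
\]
so that Lemma~\ref{lem:ellipreconBound}, applied to each nodal reconstruction error, together with the partition-of-unity identity $\lcurr(t) + \lprev(t) = 1$ and $\ellipest(\U[j]) = \ellipreconest{BE}(t^{j}) \le \norm{\ellipreconest{BE}}_{L^{\infty}(0,r)}$, yields $\norm{\linspacetimerecon - \lintimerecon}_{L^{\infty}(0,r;L^2(\domain))} \le \norm{\ellipreconest{BE}}_{L^{\infty}(0,r)}$. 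Collecting the contributions, with the initial elliptic reconstruction term absorbed into the $L^{\infty}$-in-time term, gives the stated estimate.

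The Crank--Nicolson case runs along identical lines from~\eqref{eq:cn:splitting}: Lemma~\ref{lem:reconstructionError} controls the first term and contributes $\norm{\ctserr(0)}$, handled exactly as $\norm{\xi(0)}$ above since $\quadspacetimerecon(0)$ is likewise the elliptic reconstruction of $\interp u_0$; the space reconstruction error $\norm{\quadspacetimerecon - \quadtimerecon}_{L^{\infty}(0,r;L^2(\domain))}$ is bounded via Definition~\ref{def:timeReconstructions}, the difference $\quadspacetimerecon - \quadtimerecon$ being assembled from the elliptic reconstruction errors of $\U[\currstep]$, $\U[\prevstep]$ and of $\timederivh(\diffoph[\currstep]\U[\currstep] + \forcehbc[\currstep])$, each estimated by Lemma~\ref{lem:ellipreconBound}, to give $\norm{\ellipreconest{CN}}_{L^{\infty}(0,r)}$; and the final term is estimated directly from Definition~\ref{def:timeReconstructions}, since for $t \in [t^{\prevstep},t^{\currstep}]$
\[
\norm{\quadtimerecon(t) - \lintimerecon(t)} = \frac{\timestep^2}{2}\lcurr(t)\lprev(t)\,\norm{\timederivh(\diffoph[\currstep]\U[\currstep] + \forcehbc[\currstep])} \le \timereconest{CN}(t) \le \norm{\timereconest{CN}}_{L^{\infty}(0,r)},
\]
using $\lcurr(t)\lprev(t) \le \frac{1}{4}$.

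Since the theorem is in essence a matter of collecting the earlier pieces, I do not expect a genuinely hard step; the substantive work lies in Lemmas~\ref{lem:be:individualterms},~\ref{lem:cn:individualterms} and, above all, Lemma~\ref{lem:reconstructionError}. The only point requiring real care is the bookkeeping of the elliptic reconstruction estimator, which enters both through the initial parabolic error $\norm{\xi(0)}$ (resp.\ $\norm{\ctserr(0)}$) and through the space reconstruction error, and which one must show is uniformly controlled in time so that it collapses into a single $\norm{\ellipreconest{BE}}_{L^{\infty}(0,r)}$ (resp.\ $\norm{\ellipreconest{CN}}_{L^{\infty}(0,r)}$) contribution; in the Crank--Nicolson case one additionally needs to check that the elliptic reconstruction of the correction term $\timederivh(\diffoph[\currstep]\U[\currstep] + \forcehbc[\currstep])$ occurring in $\quadspacetimerecon - \quadtimerecon$ is accommodated by the subadditivity of $\ellipest$ in its argument and the definition $\ellipreconest{CN}(t) = \ellipest(\quadtimerecon(t))$.
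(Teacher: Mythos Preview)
Your proposal is correct and follows essentially the same route as the paper: split via~\eqref{eq:be:splitting} or~\eqref{eq:cn:splitting}, invoke Lemma~\ref{lem:reconstructionError} for the parabolic part, Lemma~\ref{lem:ellipreconBound} for the space reconstruction error, and a direct computation for $\norm{\quadtimerecon - \lintimerecon}$. You actually supply more detail than the paper's own proof, which simply cites the two lemmas without unpacking how $\norm{\xi(0)}$ is converted to $\norm{u_0 - \interp u_0}$ or how $\norm{\linspacetimerecon - \lintimerecon}_{L^\infty(0,r;L^2(\domain))}$ is controlled pointwise in time; your treatment of both is sound. Your closing remark about the bookkeeping of the elliptic reconstruction estimator (it appears once from $\norm{\xi(0)}$ and once from the space reconstruction term, and in the Crank--Nicolson case the quadratic correction must be accommodated in $\ellipreconest{CN}$) identifies a genuine subtlety that the paper also leaves implicit; strictly speaking this produces an innocuous constant in front of $\norm{\ellipreconest{}}_{L^\infty(0,r)}$ that the paper absorbs without comment.
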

\begin{proof}
	For the backward Euler scheme, we apply the triangle inequality to observe that
	\begin{align*}
		\norm{u - \lintimerecon}_{L^{\infty}(0,r; L^2(\domain))} \leq \norm{u - \linspacetimerecon}_{L^{\infty}(0,r; L^2(\domain))} + \norm{ \linspacetimerecon - \lintimerecon}_{L^{\infty}(0,r; L^2(\domain))}.
	\end{align*}
	These two terms are then bounded using Lemmas~\ref{lem:ellipreconBound} and~\ref{lem:reconstructionError} respectively to provide the desired result.
	
	Similarly, for the Crank-Nicolson scheme, we find that
	\begin{align*}
		\norm{u - \lintimerecon}_{L^{\infty}(0,r; L^2(\domain))} \leq \norm{u - \quadspacetimerecon}_{L^{\infty}(0,r; L^2(\domain))} + \norm{ \quadspacetimerecon - \quadtimerecon}_{L^{\infty}(0,r; L^2(\domain))} + \norm{ \quadtimerecon - \lintimerecon}_{L^{\infty}(0,r; L^2(\domain))}.
	\end{align*}
	The first and second terms are once again bounded using Lemmas~\ref{lem:ellipreconBound} and~\ref{lem:reconstructionError} respectively, and we observe that, for $t \in (t^{\prevstep}, t^{\currstep}]$,
	\begin{align*}
		Q(t) - U(t) = - \frac{\timestep^2}{2} \lcurr(t) \lprev(t) \timederivh (\diffoph[\currstep] \U[\currstep] + \forcehbc[\currstep]),
	\end{align*}
	so that $\norm{Q - U} \leq \timereconest{CN}$, and the result follows.
\end{proof}

\section{Numerical experiments}
\label{sec:numerics}

\begin{wrapfigure}[41]{r}{0.45\textwidth}%
  \centering{%
  \subcaptionbox{$Q^{\currstep} = 1$ for all $\currstep$.\label{fig:weightedAccumulations:Q1}}[0.45\textwidth]{\includegraphics[width=0.45\textwidth]{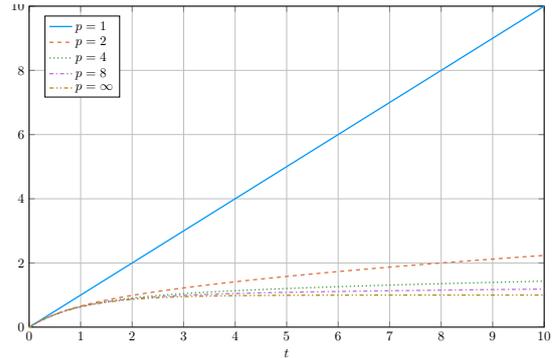}\vspace{-0.25cm}}
  
  \vspace{0.3cm}
  \subcaptionbox{$Q^{\currstep}$ specified randomly in $[0,10]$.\label{fig:weightedAccumulations:Qrand}}[0.45\textwidth]{\includegraphics[width=0.45\textwidth]{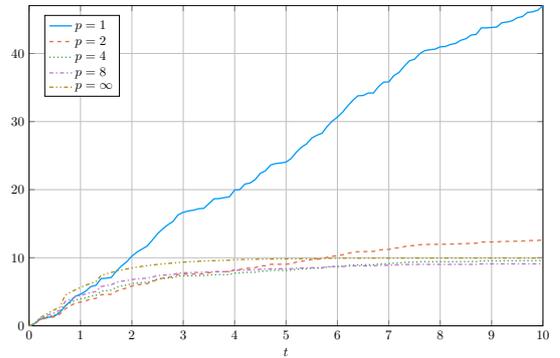}\vspace{-0.25cm}}
  
  \vspace{0.3cm}
  \subcaptionbox{$Q^{\currstep}$ as above, but with $Q^1 = 30$.\label{fig:weightedAccumulations:large_initial_Q}}[0.45\textwidth]{\includegraphics[width=0.45\textwidth]{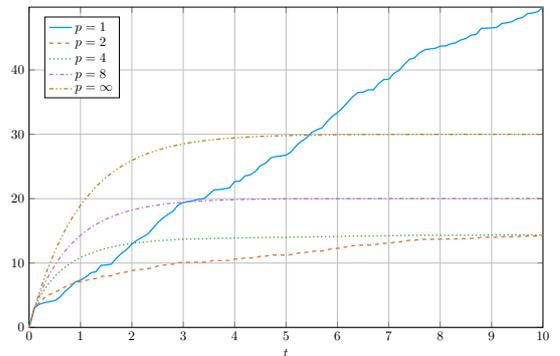}\vspace{-0.25cm}}
}
  \caption{The behaviour of the accumulations $c_{p,t} \norm{F}_{L^p(0,t)}$ for different choices of $Q^{\currstep}$ and various indicative values of $p$, with $\timestep = 0.1$.}
  \label{fig:weightedAccumulations}
\end{wrapfigure}%
In this section, we present some numerical experiments demonstrating the practical behaviour of the estimators.
We begin with an investigation into the different types of time accumulations, before considering approximating solutions to certain benchmark PDEs.
To produce results presented here, the methods and estimators described above were implemented using the open source \texttt{deal.II} finite element library~\citep{Bangerth:2007ep}.

\subsection{Comparison of accumulations}
\label{sec:accumulationComparison}
First, consider the accumulation of a term $F$ such that $F^{\currstep} = 1$ for all $\currstep$.
In this case, for $p \in [1,\infty)$, we have
\[\displaywidth=\parshapelength\numexpr\prevgraf+2\relax
    \norm{F}_{L^p(0,t^m)} = \Big( \sum_{\currstep = 1}^{m} \timestep (F^{\currstep})^{p} \Big)^{1/p} = \Big( \sum_{\currstep = 1}^{m} \timestep \Big)^{1/p} = (t^{m})^{1/p},
    \]
and $\norm{F}_{L^\infty(0,t^m)} = 1$.
It is clear, therefore, that for finite values of $p$ the accumulation will grow unboundedly, while for $p = \infty$ the value will remain constant.
Despite this, for $t^m < 1$, the choice $p=1$ clearly provides the smallest accumulation.
Thus, we find that
\[\displaywidth=\parshapelength\numexpr\prevgraf+2\relax
    \min_{p \in [1,\infty]} \norm{F}_{L^p(0,t^m)} =
    \begin{cases}
        t^{m} &\text{ when } t^m \leq 1,
        \\
        1 &\text{otherwise},
    \end{cases}
    \]
implying that the $L^1$-type accumulation is best for short time computations (i.e. with $T \leq 1$), while the $L^{\infty}$-type accumulation is best for long time computations.

The situation is different, however, if we include the accumulation control coefficients $c_{p,t}$, introduced in Definition~\ref{def:timeAccumulations}.
In this case, the growth of the different weighted accumulations $c_{p,t} \norm{F}_{L^p(0,t)}$ is depicted in Figure~\ref{fig:weightedAccumulations}, under the supposition that the constant $\alpha_{\lambda} = 1$.
Figure~\ref{fig:weightedAccumulations:Q1} shows the growth of the different accumulations in the case when $F^{\currstep} = 1$ for all $\currstep$, and in this case it appears that the control coefficients have a levelling effect, in that the $L^{\infty}$-type accumulation is always the smallest.
This is no longer true in the more practically relevant situation when $F^{\currstep}$ varies randomly in the interval $[0,10]$, depicted in Figure~\ref{fig:weightedAccumulations:Qrand}.
In this case, it is still clear that the $L^1$-type accumulation grows much larger than the others, while the $L^{\infty}$-type accumulation grows the slowest.
The actual minimum, however, appears to be somewhere in between and changes throughout the time interval.
For much longer times, however, it is expected that the minimum will ultimately be the $L^{\infty}$-type accumulation since in this case this is known to be bounded above by $10$.

Figure~\ref{fig:weightedAccumulations:large_initial_Q} shows the accumulations applied to the same data set but with $F^1 = 30$.
This makes little difference to the $L^1$-type accumulation, although has a dramatic effect for larger values of $p$.
In such a situation, therefore, the $L^{\infty}$-type accumulation will not be minimal until $t$ becomes very large.
Instead, in this situation it may be preferable to use a combination of different accumulation types on different subintervals of the time domain, as described in Section~\ref{sec:timeAccumulations}.

\subsection{Implementing $L^p$-type accumulations}\label{sec:implementation}
To evaluate the estimator using the optimal accumulation for each term, we note that it is possible to store the contributions to each term from each time step, and then implement a numerical optimisation procedure to systematically find the optimal value of p.
However, such an algorithm would be expensive in terms of both memory (to store the data from every time step) and computation (to find the minimal accumulation).
Instead, we propose to evaluate the estimator using just a small number of accumulations for each term, for instance for $p \in \{ 1,2,4,8,16,\infty \}$.
Computing the estimator like this only requires storing six numbers, as the degree $p$ accumulation can be updated on each step using the update rule
\begin{align*}
	\norm{F}_{L^p(0,t^{\currstep})} = 
	\begin{cases}
		\big( \norm{F}_{L^p(0,t^{\prevstep})}^p + \timestep \,\, (F^{\currstep})^p \big)^{1/p} 
		&\text{ for } p \in [1,\infty)
		\\
		\max \{ \norm{F}_{L^p(0,t^{\prevstep})}, F^{\currstep} \} 
		&\text{ for } p = \infty.
	\end{cases}
\end{align*}
This is the approach taken for evaluating the estimator in all of the numerical experiments presented below.

\subsection{Benchmark numerical examples}
We now investigate the behaviour of the estimator when applied to a sequence of example problems.
For these examples we take $A$ to be the $\spacedim \times \spacedim$ identity matrix and $\mu = 0$.
The results are computed over the domain $\domain = (0,1)\times(0,1)$ with $\finaltime=15$, using a sequence of progressively finer meshes consisting of uniform square elements and polynomial degree $\degree = 1$.
Unless otherwise stated, the mesh sequence consists of meshes with $2^{2i}$ elements, where $i \in \{2,3,4,5,6\}$.
The mesh size may therefore be computed in each case as $h_i = 2^{1/2 - 2i}$.

For each example problem on each sequence of meshes we plot a composite figure of the resulting data.
Subfigure (A) of each demonstrates the behaviour of the $L^{\infty}(0,t; L^2(\domain))$ error and the total estimator, computed by taking the minimum of a subset of the accumulations as described in Section~\ref{sec:implementation}, with the simulation on each mesh being shown as a separate line on the plot (solid lines indicate the results computed on the finest mesh).
Beneath each of these, we plot the convergence rate with respect to $i$ as a function of time, computed for a quantity $F^i$ by
\begin{align*}
	\operatorname{rate}_i(t) = \frac{\log(F^i(t)) - \log(F^{i-1}(t))}{\log(h_i) - \log(h_{i-1})}.
\end{align*}
Since the timestep $\timestep[i]$ is a specified function of $h_{i}$ in each case, this provides a meaningful definition of convergence rate.
Next to these we plot the effectivity of the estimator, computed as
\begin{align*}
	\operatorname{effectivity}_i(t) = \frac{\operatorname{estimator}_i(t)}{\norm{u - \lintimerecon_i}_{L^{\infty}(0,t; L^2(\domain))}}.
\end{align*}
Here, $\operatorname{estimator}_i(t)$ and $\lintimerecon_i(t)$ denote the estimator and discrete solution calculated on mesh $i$.

Subfigure (B) of each plot shows the magnitudes and convergence rates of each component of the error estimator, computed using the minimal accumulation in each case.
Finally, subfigure (C) in each case shows a comparison between the estimators evaluated using various specified types of accumulation.
The line marked `$L^1$ accumulations' shows the value of the estimator when all components are evaluated using $L^p$-type accumulations with the least admissible value of $p$ in each case (for some terms this is 1, for others it is 2).
Similarly, the line `$L^2$ accumulations' shows the behaviour where $p$ is taken as value 2 for all components, and the line `$L^{\infty}$ accumulations' shows the effect of taking $p = \infty$.
To the right of these, we plot the effectivity computed for each of these three estimators.

\subsubsection{Sinusoidal benchmark}
We consider the problem with the solution
\begin{align}\label{eq:ex:sinsinsin}
	u(t,x,y) = \sin(\pi t)\sin(\pi x)\sin(\pi y).
\end{align}
For this example, we show results for the backward Euler method with time steps linked to the spatial discretisation as $\timestep[i] = h_i^2$ and $\timestep[i] = h_i$ in Figures~\ref{fig:be:sinsinsin:hsq} and~\ref{fig:be:sinsinsin:h} respectively, and for the Crank-Nicolson method with $\timestep[i] = h_i$ and $\timestep[i] = h_i^{1/2}$ in Figures~\ref{fig:cn:sinsinsin:h} and~\ref{fig:cn:sinsinsin:hroot} respectively.
Since a priori error estimates inform us that the $L^{\infty}(0,t; L^2(\domain))$ should converge with order $\mathcal{O}(\timestep + h^2)$ for the backward Euler method and with order $\mathcal{O}(\timestep^2 + h^2)$ for the Crank-Nicolson scheme, we deduce that the expected convergence rates for the four simulations should be 2, 1, 2, and 1 respectively.
For the Crank-Nicolson method with $\timestep[i] = h_i^{1/2}$, we include additional data for $i \in \{7,8\}$ in order to better demonstrate the asymptotic behaviour of the estimator.

\begin{wrapfigure}[20]{r}{0.5\textwidth}%
  \centering{%
  \includegraphics[width=0.45\textwidth]{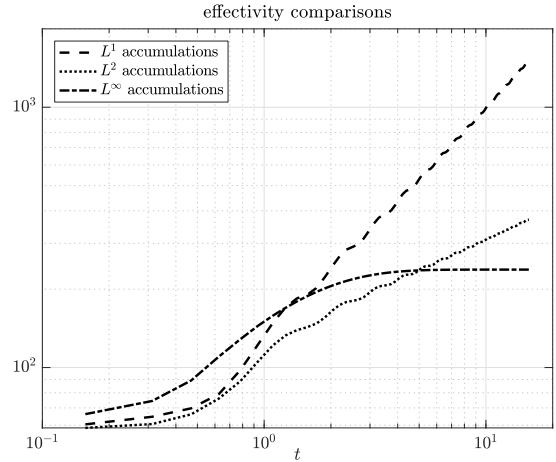}}
  \caption{The effectivity comparison of Figure~\ref{fig:cn:sinsinsin:h}(C) replotted on logarithmic axes, revealing the asymptotic rates at which the different effectivities grow.}
  \label{fig:cn:sinsinsin:loglogeff}
\end{wrapfigure}%

Examining the plots in subfigure (A) for each figure indicate that these rates are attained by the true error, while the estimator seems to converge slightly faster on the coarser meshes for both the cases when the expected rate is 1 (Figures~\ref{fig:be:sinsinsin:h} and~\ref{fig:cn:sinsinsin:hroot}).
Since this appears to be settling down towards the expected rates on the finer meshes, this quirk is presumably attributable to some pre-asymptotic behaviour, and could be due to the fact that we are ignoring the values of the constants weighting each term of the estimator, meaning that the spatial estimators (which converge more quickly) initially dominate over the temporal estimators which are responsible for restricting the total estimate to order 1.
This pre-asymptotic behaviour observed in Figures~\ref{fig:be:sinsinsin:h} and~\ref{fig:cn:sinsinsin:hroot} also has an impact on the effectivities of the estimator, we are significantly larger on coarser meshes than on finer meshes in the sequence.
In Figures~\ref{fig:be:sinsinsin:hsq} and~\ref{fig:cn:sinsinsin:h}, however, where the error is expected to converge at rate 2, the estimator converges much more similarly to the error, resulting in effectivities which do not depend so much on the mesh.

What's striking in all four figures, though, is that the effectivities become \emph{constant} with respect to time.
This is entirely due to the use of the $L^{\infty}$-type accumulations in the estimator, as shown by the plots in subfigure (C) in each case.
The estimators making use of the $L^1$ and $L^2$-type accumulations grow much faster in this case than the actual error.
This is best demonstrated by Figure~\ref{fig:cn:sinsinsin:loglogeff}, in which the data from the effectivity comparison of Figure~\ref{fig:cn:sinsinsin:h}(C) is replotted on logarithmic axes.
It becomes evident from this that, after an initial period in which the accumulation coefficient dominates the profile of each curve, the effectivity of the $L^1$-type estimator grows like $t$, the $L^2$-type estimator's effectivity grows like $t^{1/2}$, and the $L^{\infty}$-type estimator becomes constant.

\subsubsection{Polynomial benchmark}

We consider a problem with the solution
\begin{align}\label{eq:ex:poly}
	u(t,x,y) = \frac{x (x - 1) y (y - 1)}{250} t(t-2)(t-4)(t-6)(t-8)(t-10) ,
\end{align}
which satisfies the problem with right hand side
\begin{align*}
	f(t,x,y) = \frac{x (x - 1) + y (y - 1)}{125} t(t-2)(t-4)(t-6)(t-8)(t-10) .
\end{align*}
We note that for $t \in [0,10]$, $\max_{(x,y) \in \domain} u(t,x,y)$ oscillates within $[-0.3,0.1]$, but the solution grows so rapidly when $t \in [10,15]$ that $\max_{(x,y) \in \domain} u(15,x,y) = 42,230$.
The interesting features of this benchmark problem are therefore twofold: firstly, the forcing function is non-zero on $\boundary$; secondly, we are interested to see how the estimator behaves with the rapid growth in the solution.

The results obtained from the backward Euler and Crank-Nicolson methods applied to this problem are shown in Figures~\ref{fig:be:poly} and~\ref{fig:cn:poly} respectively.
To address the first point, we observe that all components of both estimators remain of at least optimal order due to the modified definition of the elliptic reconstruction operator.
The second point is more interesting.
We observe that the effectivity of both estimators are relatively well behaved, remaining around 100.
However, from the comparison of the estimators and effectivities obtained using different types of accumulation, we observe that in neither case is it optimal to use the $L^{\infty}$-type accumulations; rather, $L^2$-type accumulations seem to behave best. 
Broadly speaking, this can be attributed to the fact that the error is large early on in the simulation, as in case (C) of Section~\ref{sec:accumulationComparison}, so the $L^1$ and $L^2$-type accumulations are preferable early on, while the contribution to the error on each timestep grows for $t > 10$, ensuring that the $L^{\infty}$-type accumulations also grow.
We note that this could be a situation in which it is preferable to use different types of accumulation on different sections of the time domain, as discussed in Section~\ref{sec:timeAccumulations}.

\section{Conclusions}\label{sec:conclusion}

In summary, we have derived a family of new optimal order estimates for the $L^{\infty}(0,t; L^2(\domain)$ norm error of finite element discretisations with backward Euler and Crank-Nicolson time-stepping schemes for a class of parabolic problems.
The estimates in this family are almost all new, and allow the individual terms of the estimator to accumulate through time in an $L^{p}(0,t)$ fashion for any $p \in [1,\infty]$.
Included amongst this spectrum are, of course, previously known estimates relying on $L^1$ or $L^2$-type time accumulations of their terms, which we have demonstrated to exhibit effectivities (the ratio of the estimator to the true error) which grow like $t$ or $\sqrt{t}$, where $t$ is the simulation duration.
Estimators based on $L^{\infty}$-type accumulations were previously derived using parabolic duality-based techniques, although it was previously unknown how these could be proven using energy arguments as they have been here.
The advantage of using $L^{\infty}$-type accumulations is that the estimators attain constant effectivities on benchmark problems, meaning they are much better suited as error estimates for long time simulations and offer potential for deriving lower bounds on the error.

The technique we used for deriving these estimates is fundamentally based on the structure of the partial differential equation, rather than the construction of the numerical method.
To ensure the details of the new technique remain clear, we have only demonstrated the technique for finite element discretisations of linear parabolic problems with backward Euler and Crank-Nicolson time-stepping schemes, although we expect it will be applicable more widely --- both in the sense of wider classes of model problems involving nonlinearities for instance, and in the sense of other varieties of numerical scheme.
Similarly, the technique applies equally well in the case of adaptive space and time meshes, although presents the interesting conundrum of devising mesh refinement schemes which behave well when the mesh transfer error is accumulated in an $L^{\infty}(0,t)$ fashion.

\section*{Acknowledgements}
The author would like to thank T. Pryer for many useful comments on the manuscript.
This work was partially supported by the EPSRC grant EP/P000835/1.
\bibliographystyle{alphaabbr}
\bibliography{parts/references}

\begin{figure}%
  \centering{%
  \subcaptionbox{Behaviour of the error and estimator}[\textwidth]{\includegraphics[width=0.9\textwidth]{\figuredir/sinsinsin/BE_sinsinsin_hsq_errors}\vspace{-0.75em}}
  \vspace{1em}
  
  \subcaptionbox{Behaviour of the components of the estimator}[\textwidth]{\includegraphics[width=0.9\textwidth]{\figuredir/sinsinsin/BE_sinsinsin_hsq_components}\vspace{-0.75em}}
  \vspace{1em}
  
  \subcaptionbox{Comparison of the impact of different time accumulation types on the estimator}[\textwidth]{\includegraphics[width=0.9\textwidth]{\figuredir/sinsinsin/BE_sinsinsin_hsq_comparison}\vspace{-0.75em}}%
  }%
  \caption{Behaviour of the error and estimator for the backward Euler scheme applied to the sinusoidal example~\eqref{eq:ex:sinsinsin} with $\tau \approx h^2$.}
  \label{fig:be:sinsinsin:hsq}
\end{figure}

\begin{figure}%
  \centering{%
  \subcaptionbox{Behaviour of the error and estimator}[\textwidth]{\includegraphics[width=0.9\textwidth]{\figuredir/sinsinsin/BE_sinsinsin_h_errors}\vspace{-0.75em}}
  \vspace{1em}
  
  \subcaptionbox{Behaviour of the components of the estimator}[\textwidth]{\includegraphics[width=0.9\textwidth]{\figuredir/sinsinsin/BE_sinsinsin_h_components}\vspace{-0.75em}}
  \vspace{1em}
  
  \subcaptionbox{Comparison of the impact of different time accumulation types on the estimator}[\textwidth]{\includegraphics[width=0.9\textwidth]{\figuredir/sinsinsin/BE_sinsinsin_h_comparison}\vspace{-0.75em}}%
  }%
  \caption{Behaviour of the error and estimator for the backward Euler scheme applied to the sinusoidal example~\eqref{eq:ex:sinsinsin} with $\tau \approx h$.}
  \label{fig:be:sinsinsin:h}
\end{figure}

\begin{figure}%
  \centering{%
  \subcaptionbox{Behaviour of the error and estimator}[\textwidth]{\includegraphics[width=0.9\textwidth]{\figuredir/sinsinsin/CN_sinsinsin_h_errors}\vspace{-0.75em}}
  \vspace{1em}
  
  \subcaptionbox{Behaviour of the components of the estimator}[\textwidth]{\includegraphics[width=0.9\textwidth]{\figuredir/sinsinsin/CN_sinsinsin_h_components}\vspace{-0.75em}}
  \vspace{1em}
  
  \subcaptionbox{Comparison of the impact of different time accumulation types on the estimator}[\textwidth]{\includegraphics[width=0.9\textwidth]{\figuredir/sinsinsin/CN_sinsinsin_h_comparison}\vspace{-0.75em}}%
  }%
  \caption{Behaviour of the error and estimator for the Crank-Nicolson scheme applied to the sinusoidal example~\eqref{eq:ex:sinsinsin} with $\tau \approx h$.}
  \label{fig:cn:sinsinsin:h}
\end{figure}

\begin{figure}%
  \centering{%
  \subcaptionbox{Behaviour of the error and estimator}[\textwidth]{\includegraphics[width=0.9\textwidth]{\figuredir/sinsinsin/CN_sinsinsin_hroot_errors}\vspace{-0.75em}}
  \vspace{1em}
  
  \subcaptionbox{Behaviour of the components of the estimator}[\textwidth]{\includegraphics[width=0.9\textwidth]{\figuredir/sinsinsin/CN_sinsinsin_hroot_components}\vspace{-0.75em}}
  \vspace{1em}
  
  \subcaptionbox{Comparison of the impact of different time accumulation types on the estimator}[\textwidth]{\includegraphics[width=0.9\textwidth]{\figuredir/sinsinsin/CN_sinsinsin_hroot_comparison}\vspace{-0.75em}}%
  }%
  \caption{Behaviour of the error and estimator for the Crank-Nicolson scheme applied to the sinusoidal example~\eqref{eq:ex:sinsinsin} with $\tau \approx \sqrt{h}$.}
  \label{fig:cn:sinsinsin:hroot}
\end{figure}

\begin{figure}%
  \centering{%
  \subcaptionbox{Behaviour of the error and estimator}[\textwidth]{\includegraphics[width=0.9\textwidth]{\figuredir/poly/BE_poly_hsq_errors}\vspace{-0.75em}}
  \vspace{1em}
  
  \subcaptionbox{Behaviour of the components of the estimator}[\textwidth]{\includegraphics[width=0.9\textwidth]{\figuredir/poly/BE_poly_hsq_components}\vspace{-0.75em}}
  \vspace{1em}
  
  \subcaptionbox{Comparison of the impact of different time accumulation types on the estimator}[\textwidth]{\includegraphics[width=0.9\textwidth]{\figuredir/poly/BE_poly_hsq_comparison}\vspace{-0.75em}}%
  }%
  \caption{Behaviour of the error and estimator for the backward Euler scheme applied to the polynomial example~\eqref{eq:ex:poly} with $\tau \approx h^2$.}
  \label{fig:be:poly}
\end{figure}

\begin{figure}%
  \centering{%
  \subcaptionbox{Behaviour of the error and estimator}[\textwidth]{\includegraphics[width=0.9\textwidth]{\figuredir/poly/CN_poly_h_errors}\vspace{-0.75em}}
  \vspace{1em}
  
  \subcaptionbox{Behaviour of the components of the estimator}[\textwidth]{\includegraphics[width=0.9\textwidth]{\figuredir/poly/CN_poly_h_components}\vspace{-0.75em}}
  \vspace{1em}
  
  \subcaptionbox{Comparison of the impact of different time accumulation types on the estimator}[\textwidth]{\includegraphics[width=0.9\textwidth]{\figuredir/poly/CN_poly_h_comparison}\vspace{-0.75em}}%
  }%
  \caption{Behaviour of the error and estimator for the Crank-Nicolson scheme applied to the polynomial example~\eqref{eq:ex:poly} with $\tau \approx h$.}
  \label{fig:cn:poly}
\end{figure}

\end{document}